\newtheorem{thm}{Theorem}[section]
\newtheorem {asp}{Assumption}[section]
\newtheorem{lm}{Lemma}[section]
\newtheorem{prop}{Proposition}[section]
\theoremstyle{definition}
\newtheorem{defn}[thm]{Definition}
\theoremstyle{remark}
\newtheorem{rem}{Remark}[section]
\newtheorem{exam}{Example}[section]
\numberwithin{equation}{section}
\DeclareMathOperator{\trace}{tr}
\newcommand{\eps}{\varepsilon}
\newcommand{\p}{\mathfrak{p}}
\newcommand{\m}{\mathfrak{m}}
\newcommand{\C}{\mathcal{C}}
\newcommand{\D}{\mathcal{D}}
\newcommand{\M}{\mathcal{M}}
\newcommand{\F}{\mathcal{F}}
\newcommand{\E}{\mathbb{E}}
\newcommand{\LL}{\mathcal{L}}
\newcommand{\N}{{\mathbb{Z}}_+}
\newcommand{\Z}{\mathbb{Z}}
\newcommand{\PP}{\mathbb{P}}
\newcommand{\R}{\mathbb{R}}
\numberwithin{equation}{section}
\newcommand{\1}{\boldsymbol{1}}
\newcommand{\wdt}{\widetilde}
\newcommand{\bed}{\begin{displaymath}}
\newcommand{\eed}{\end{displaymath}}
\newcommand{\bea}{\bed\begin{array}{rl}}
\newcommand{\eea}{\end{array}\eed}
\newcommand{\ad}{&\!\!\!\disp}
\newcommand{\barray}{\begin{array}{ll}}
\newcommand{\earray}{\end{array}}
\def\disp{\displaystyle}
\newcommand{\rr}{{\Bbb R}}
\def\bar{\overline}
\def\hat{\widehat}
\def\a.s{\text{\;a.s.\;}}
\def\bnu{\boldsymbol{\nu}}
\begin{document}
\title{Stability of Regime-Switching Diffusion Systems with
Discrete
States Belonging to a Countable Set\thanks{This
research was supported in part by the National Science Foundation under grant DMS-1207667.}}
\author{Dang Hai  Nguyen\thanks{Department of Mathematics, Wayne State University, Detroit, MI
48202,
dangnh.maths@gmail.com.} \and
George Yin\thanks{Department of Mathematics, Wayne State University, Detroit, MI
48202,
gyin@math.wayne.edu.}}
\maketitle

\begin{abstract} This work focuses on
stability of regime-switching diffusions consisting of continuous and discrete components, in which the discrete
component switches in a countably infinite set and its switching rates at
 current time
 depend on the
 continuous component.
In contrast to the  existing approach,
this work provides more practically viable approach with more
feasible conditions
for stability. A classical approach for asymptotic stability
 using Lyapunov function techniques shows the
Lyapunov function evaluated at the solution process goes to 0 as time $t\to \infty$.
A distinctive feature of this paper is to obtain
 estimates of path-wise rates of convergence, which pinpoints how fast the aforementioned convergence to 0 taking place. 
 Finally,
some examples are given to illustrate our findings.

\bigskip
\noindent {\bf Keywords.} Switching diffusion, past-dependent switching, existence and uniqueness of solution, Feller property.

\bigskip
\noindent{\bf Subject Classification.} 34D20, 60H10, 
%92D25, 
93D05, 93D20.

\end{abstract}

\newpage

%[?? In diffusion and switching diffusions, we have a result that the origin is inaccessible. I think this still holds in the current case.
%Later, in the linearized part, when we use $|x|^p$, we need this condition.??]

\section{Introduction}\label{sec:int}
In the new era,
because of the pressing needs in networked systems
(including physical, biological, ecological, and social dynamic systems), large-scale optimization, and wired and wireless communications,
many new sophisticated control  systems have come into being.  Hybrid systems in which discrete and continuous states coexist and interact, are such a representative.
In particular, taking random disturbances into consideration, the so-called regime-switching diffusion
systems have drawn resurgent and increasing attentions.
A regime-switching diffusion is a two-component process $(X(t),\alpha(t))$, a continuous component and a discrete component taking values in a set
consisting of isolated points.
When the discrete component takes a value $i$ (i.e., $\alpha(t)=i$),
the continuous component $X(t)$ evolves according to the diffusion process whose drift and diffusion coefficients depend on $i$.
Asymptotic properties of
such systems such as stability have been studied intensively, 
because of numerous applications. For example, many issues such as
permanence, extinction, and persistence
 etc. of species in population dynamics and ecology are all linked to the stability issues.

Because many systems are in operation for a long period of time,
an important problem of great interest is the
stability of such systems.
Many results on different types of stability have been given for
switching diffusions when the state space of $\alpha(t)$ is finite (e.g., \cite{KZY, MY,XZ,YZW,YZ}).
Assuming that $\alpha(t)$ takes values in a countable state space,
 stability of the processes is more difficult to analyze.
To the best of our knowledge,
very few papers have considered stability of switching diffusion with countable switching states.
In \cite{SX}, some conditions for stability of those systems have been given
by approximating the generator of continuous state dependent switching process by that  of a Markov chain with finite state space.

To find sufficient conditions for stability,
it is desirable to find some common threads that are shared by
many specific systems.
Our motivation is based on the following thoughts.
First,
although the dynamics of $X(t)$
depend on the residence of the state of $\alpha(t)$,
the structures of equations for different
 states of $\alpha(t)$ are
 not drastically different but rather
 similar in certain sense.
This observation suggests finding a Lyapunov function that has similar form in different states of $\alpha(t)$.
For instance, suppose there is a Lyapunov function $V(x)$ such that in each
discrete state $i$, we have
%estimate
$\LL_i V(x)\leq c_i V(x)$,
where $\LL_i$ is the generator of the diffusion in regime $i$
(more conditions and explanations for this inequality and related issues will be given in the next sections).
In this case, there is a common Lyapunov function shared by all the discrete states (or the Lyapunov function is independent of the discrete states).
It is well known that the sign of $c_i$ determines stability of the diffusion in each state $i$.
For the regime-switching diffusion,
one can expect that the stability of the system  depends not only on  $\{c_i\}$ but also on
%and
the generator $Q(x)$ of the switching part.
A natural question is: under what relation between $\{c_i\}$ and $Q(x)$, the regime-switching diffusion
is stable?
When the number of regimes is finite, this question has been answered relatively completely (see \cite{KZY, SX}).
However, it is not straightforward to answer this question for the case of the discrete states belonging to a countable state space. We aim to take the challenges here.
Moreover, this paper also considers a generalization
when
the condition $\LL_i V(x)\leq c_i V(x)$ is replaced with
a condition of the type
$\LL_i V(x)\leq c_i g(V(x))$.

To date, muck work has been devoted to
the asymptotic stability  of diffusions and switching diffusions. A commonly used technique is based on Lyapunov stability argument.
For example, treating asymptotic stability,
much effort has been devoted to obtaining sufficient conditions under which the Lyapunov function evaluated at the solutions of the processes go to 0 as $t\to \infty$.
However,
 the question on how fast the Lyapunov function goes to 0 is unknown to date to the best of our knowledge.
 The current paper  settles this issue; it may be one of the first to provide a convergence rate  of the underlying process.
As another novel contribution, we
estimate the convergence rate of the solution to the equilibrium point by use of
 properties of the function $g(\cdot)$.

Treating switching diffusions as Markov processes, one may obtain
sufficient conditions for stability by
using a Lyapunov function satisfying certain properties.
However, 
%in practice, it is generally difficult to find such
%Lyapunov function. 
the conditions are  often 
%far from ideal and are 
not directly related to the given system coefficients
(such as the drifts and diffusion matrices).
To obtain conditions that are based on coefficients of the systems,
we look at the issue of linearized (about the point of equilibrium) of the systems.
The idea is originated from the
topological equivalence of the linearized systems and original nonlinear systems due to the
 well-known Hartman-Grobman theorem in differential equations. Here in addition to linearizing the systems about the equilibrium point, we also replace $Q(x)$ by $Q(0)$.

The rest of this paper is organized as follows.
In Section \ref{sec:for}, we formulate
the equation for a regime-switching diffusion
and pose appropriate conditions for the existence and uniqueness
of solutions. We then provide the definitions
of certain types of stability
as well as give general conditions for the stability
of switching.
In Section \ref{sec:sta}, novel
and practical conditions for stability and instability of regime-switching diffusions
are given.
Applications of these conditions to linearizable systems
are given in Section \ref{sec:lin} and
examples are provided in Section \ref{sec:exa} to illustrate our findings.
Section \ref{sec:rem} is devoted to several remarks.
Finally, we provide the proofs of a number of technical results in an appendix.

\section{Formulation and Auxiliary Results}\label{sec:for}
Let $(\Omega,\F,\{\F_t\}_{t\geq0},\PP)$ be a complete filtered probability space with the filtration $\{\F_t\}_{t\geq 0}$ satisfying the usual condition,  i.e., it is increasing and right continuous while $\F_0$ contains all $\PP$-null sets.
Let $W(t)$ be an $\F_t$-adapted and $\R^d$-valued Brownian motion.
Suppose $b(\cdot,\cdot): \R^n\times\N\to\R^n$ and $\sigma(\cdot,\cdot): \R^n\times\N\to\R^{n\times d}$.
Consider the two-component process $(X(t),\alpha(t))$, where
 $\alpha(t)$ is a pure jump process taking value in  $\N= {\mathbb N} \setminus \{0\}=\{1,2,\dots\}$, the set of positive integers, and $X(t)\in\R^n$ satisfies

\begin{equation}\label{eq:sde}
dX(t)=b(X(t), \alpha(t))dt+\sigma(X(t),\alpha(t))dW(t).
\end{equation}

We assume that the jump intensity of $\alpha(t)$ depends on the current state of $X(t)$,
that is, there are functions $q_{ij}(\cdot):\R^n\to\R$ for $i,j\in\N$ satisfying
\begin{equation}\label{eq:tran}\begin{array}{ll}
&\disp \PP\{\alpha(t+\Delta)=j|\alpha(t)=i, X(s),\alpha(s), s\leq t\}=q_{ij}(X(t))\Delta+o(\Delta) \text{ if } i\ne j \
\hbox{ and }\\
&\disp \PP\{\alpha(t+\Delta)=i|\alpha(t)=i, X(s),\alpha(s), s\leq t\}=1-q_{i}(X(t))\Delta+o(\Delta).\end{array}\end{equation}
Throughout this paper, $q_{ij}(x)\geq0$ for each $i\ne j$
and $\sum_{j\in \N} q_{ij}(x)=0$ for each $i$ and all $x\in\R^n$. Denote
 $q_{i}(x)=\sum_{j=1,j\ne i}^\infty q_{ij}(x)$
%We also denote
(so $q_{ii}(x)=-q_i(x)$).
and $Q(x)=(q_{ij}(x))_{\N\times\N}$.
%, which means that $\alpha(t)$ is conservative.
The process $\alpha(t)$ can be defined rigorously as the solution to a stochastic differential equation with respect to a Poisson random measure.
For each function $x\in\R^n, i\in\N$,  let $\Delta_{ij}(x), j\ne i$  be the consecutive left-closed, right-open intervals of the real line, each having length $q_{ij}(x)$.
That is,
\bea \ad \Delta_{i1}(x)=[0,q_{i1}(x)),\\
\ad \Delta_{ij}(x)=\Big[\sum_{k=1,k\ne i}^{j-1}q_{ik}(x),\sum_{k=1,k\ne i}^{j}q_{ik}(x)\Big), j>1, j\ne i.\eea
Define $h:\R^n\times\N\times\R\mapsto\R$ by
$h(x, i, z)=\sum_{j=1, j\ne i}^\infty(j-i)\1_{\{z\in\Delta_{ij}(x)\}}.$
The process $\alpha(t)$ can be defined as the solution to
$$d\alpha(t)=\int_{\R}h(X_t,\alpha(t-), z)\p(dt, dz)$$	
where $a(t-)=\lim\limits_{s\to t^-}\alpha(s)$ and $\p(dt, dz)$ is a Poisson random measure with intensity $dt\times\m(dz)$ and $\m$ is the Lebesgue measure on $\R$ such that
$\p(dt, dz)$ is
independent of the Brownian motion $W(\cdot)$.
The pair $(X(t),\alpha(t))$ is therefore a solution to
\begin{equation}\label{e2.3}
\begin{cases}
dX(t)=b(X(t), \alpha(t))dt+\sigma(X(t),\alpha(t))dW(t) \\
d\alpha(t)=\disp\int_{\R}h(X(t),
%X_t,
\alpha(t-), z)\p(dt, dz).\end{cases}
\end{equation}
A strong solution to  \eqref{e2.3} on $[0,T]$ with initial data $(x,i) \in \R^n \times \N$
%which is a
%being
%$\C\times\N$-valued and $\F_0$-measurable random variable,
is an $\F_t$-adapted process $(X(t),\alpha(t))$ such that
\begin{itemize}
  \item $X(t)$ is continuous and $\alpha(t)$ is cadlag (right continuous with left limits) with probability 1 (w.p.1).
  \item $X(0)=x$ and  $\alpha(0)=i_0$
  \item $(X(t),\alpha(t))$ satisfies \eqref{e2.3} for all $t\in[0,T]$ w.p.1.
\end{itemize}
Let  $f(\cdot,\cdot): \R^n\times\N\mapsto\R$ be  twice continuously differentiable in $x$.
We define the operator $\LL f(\cdot, \cdot): \R^n\times\N\mapsto \R$ by
\begin{equation}\label{eq:oper-def}
\begin{aligned}
\LL f(x, i)=&[\nabla f(x,i)]^\top b(x,i)+\dfrac12\trace\Big(\nabla^2 f(x,i)A(x,i)\Big) +\sum_{j=1,j\ne i}^\infty q_{ij} (x)
%(\phi)
\big[f(x,j)-f(x,i)\big]\\
=&
\sum_{k=1}^nb_k(x,i)f_k(x,i)
+\dfrac12
\sum_{k,l=1}^na_{kl}(x,i)f_{kl}(x,i) +\sum_{j=1,j\ne i}^\infty q_{ij}(x)\big[f(x,j)-f(x,i)\big],
\end{aligned}
\end{equation}
%[??often, $\nabla f$ is understood to be a column vector?? So may be better to use %$(\nabla f(x,i))^T$??]
where $\nabla f(x,i)=(f_1(x,i),\dots,f_n(x,i))\in \rr^{1\times n}$
and $\nabla^2 f(x,i)=(f_{ij}(x,i))_{n\times n}$ are the gradient and Hessian of $f(x,i)$ with respect to $x$, respectively,
 with \bea \ad f_k(x,i) =(\partial /\partial x_k) f(x,i),\
f_{kl} (x, i) = (\partial^2/\partial x_k \partial x_l) f(x,i),\
\hbox{ and }\\
\ad A(x,i)=(a_{ij}(x,i))_{n\times n}=\sigma(x,i)\sigma^\top(x,i),\eea
where $z^\top$ denotes the transpose of $z$.
If $(X(t),\alpha(t))$ satisfies \eqref{e2.3}, then by modifying the proof of \cite[Lemma 3, p.104]{AS},
we have the generalized It\^o formula:
\bed\label{e2.4}
\begin{aligned}
f(X(t),\alpha(t))-f(X(0),\alpha(0))=\int_0^t\LL f(X(s),\alpha(s-))ds+M_1(t)+M_2(t)
\end{aligned}
\eed
where $M_1(\cdot)$ and $M_2(\cdot)$ are two local martingales defined by
\begin{equation}\label{eq:M1-M2}
\barray \ad
M_1(t)=\int_0^t\nabla f(X(s),\alpha(s-))\sigma(X(s),\alpha(s-))dW(s),
\\
\ad
M_2(t)=\int_0^t\int_\R\big[f\big(X(s),\alpha(s-)+h(X(s),\alpha(s-), z)\big)-f(X(s),\alpha(s-))\big]\mu(ds,dz),
\earray \end{equation}
and $\mu(ds,dz)$ is the compensated Poisson random measure given by
$$\mu(ds,dz)=\p(ds,dz)- m(dz)ds .$$
Throughout this paper, we assume that either one of the following assumptions are satisfied.
Under either of them, it is proved in \cite{DY1} that \eqref{e2.3} has a unique solution with  given initial data.
Moreover, the solution is a Markov-Feller process.

%[??below in the assumption, it may be better to change
%$|| \cdot||$ to $|\cdot|$ since we are not using continuous function norm??]
\begin{asp}\label{asp4.1}{\rm
\begin{enumerate}\
  \item For each $i\in\N$, $H>0$, there is a positive constant $ L_{i,H}$ such that
$$|b(x,i)-b(y,i)|+|\sigma(y,i)-\sigma(x,i)|\leq  L_{i,H}|x-y|$$
if $x,y\in\R^n$ and $|x|$, $|y|\leq H$.
\item  For each $i\in\N$, there is a positive constant $\widetilde  L_{i}$ such that
$$|b(x,i)|+|\sigma(x,i)|\leq \widetilde  L_i(|x|+1).$$
\item $q_{ij}(x)$ is continuous in $x\in\R^n$ for each $(i,j)\in\N^2$. Moreover,
      $$M:=\sup_{x\in\R^n, i\in\N}\{|q_{i}(x)|\}<\infty.$$
\end{enumerate} }
\end{asp}
%[?? Should we use $|q_i(x)|$ above?? - actually $q_i(x)=\sum_{j\ne i} q_{ij}(x)\geq %0$.
%So I think we do not need the absolute value]

\begin{asp}\label{asp4.2}{\rm
\begin{enumerate}\
\item For each $i\in\N$, $H>0$, there is a positive constant $ L_{i,H}$ such that
$$|b(x,i)-b(y,i)|+|\sigma(x,i)-\sigma(y,i)|\leq  L_{i,H}|x-y|$$
if $x,y\in\R^n$ and $|x|,|y|\leq H$.
\item  There is a positive constant $\widetilde  L$ such that
$$|b(x,i)|+|\sigma(x,i)|\leq \widetilde  L(|x|+1).$$
\item $q_{ij}(x)$ is continuous in $x\in\R^n$ for each $(i,j)\in\N^2$. Moreover, for any $H>0$,
      $$M_H:=\sup_{x\in\R^n, |x|\leq H, i\in\N}\{|q_{i}(x)|\}<\infty.$$
\end{enumerate}}
\end{asp}
%[?? Should we use $|q_i(x)|$ above??]

We suppose that $b(0,i)=0,\sigma(0,i)=0, i\in\N$
and give the following definitions of stability.
\begin{defn}
The trivial solution $X(t)\equiv0$
is said to be
\begin{itemize}
  \item {\it stable in probability}, if for any $h>0$,
  $$\lim_{x\to0} \inf_{i\in\N}
  %\left\{
  \PP_{x,i}\big\{X(t)\leq h\,\forall\, t\geq0\big\}
  %\right\}
  =1.$$
  \item {\it asymptotic stable in probability}, if it is stable in probability and
  $$\lim_{x\to0} \inf_{i\in\N}
  %\left\{
  \PP_{x,i}\left\{\lim_{t\to\infty}X(t)=0\right\}
  %\right\}
  =1.$$
\end{itemize}
\end{defn}

We state a general result that can be proved by well-known arguments;
see \cite[Section 7.2]{YZ}.

\begin{thm}\label{thm2.1}
Let $D$ be a neighborhood of $0\in\R^n$.
Suppose there exist three functions $V(x,i):D\times\Z\mapsto\R_+$, $\mu_1(x):D\mapsto\R_+$, $\mu_2(x):D\mapsto\R_+$
such that
\begin{itemize}
  \item $\mu_1(x),\mu_2(x)$ are continuous on $D$, $\mu_k(x)=0$ if and only if $x=0$ for $k=1,2$;
\item  $V(x,i)$ is continuous on $D$ and twice continuously differentiable in $\D\setminus\{0\}$ for each $i\in\N$;
\item $\mu_1(x)\leq V(x,i)$ for any $(x,i)\in D\times\N$.
\end{itemize}
Then the following conclusions hold.
\begin{itemize}
\item
if
$\LL V(x,i)\leq 0$ for any $(x,i)\in D\times\N$,
 the trivial solution is stable.
\item
if
$\LL V(x,i)\leq -\mu_2(x)$ for any $(x,i)\in D\times\N$
 the trivial solution is asymptotically stable.
\end{itemize}
\end{thm}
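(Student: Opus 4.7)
The plan is to adapt the classical Lyapunov-function argument for (asymptotic) stability to the countable-state regime-switching setting, with the generalized It\^o formula displayed just before the theorem as the central analytical tool.

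For the stability assertion, I would fix $h>0$ small enough that $\{x:|x|\leq h\}\subset D$ and introduce the exit time $\tau_h=\inf\{t\geq 0:|X(t)|\geq h\}$. Because $X(\cdot)$ has continuous paths, $|X(\tau_h)|=h$ on $\{\tau_h<\infty\}$, and continuity of $\mu_1$ together with $\mu_1(y)=0\Leftrightarrow y=0$ makes $\mu_h:=\inf_{|y|=h}\mu_1(y)$ strictly positive. Since $V(x,i)$ may be unbounded as $i\to\infty$, I would further localize by $\tau_N:=\inf\{t\geq 0:\alpha(t)\geq N\}$; non-explosion of $\alpha$, guaranteed by the intensity bound $\sup_{x,i}q_i(x)\leq M$ in Assumption \ref{asp4.1} (or its local analogue in Assumption \ref{asp4.2}), forces $\tau_N\uparrow\infty$. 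On $[0,\tau_h\wedge\tau_N\wedge t]$, $V(X(\cdot),\alpha(\cdot))$ is bounded, so the local martingales $M_1,M_2$ become mean-zero true martingales, and the generalized It\^o formula combined with $\LL V\leq 0$ yields
$$V(x,i)\;\geq\;\E_{x,i}V\big(X(\tau_h\wedge\tau_N\wedge t),\alpha(\tau_h\wedge\tau_N\wedge t)\big)\;\geq\;\mu_h\,\PP_{x,i}(\tau_h\leq\tau_N\wedge t).$$
Sending $N\to\infty$ and then $t\to\infty$ gives $\PP_{x,i}(\tau_h<\infty)\leq V(x,i)/\mu_h$, and continuity of $V$ at $0$ (with $V(0,i)=0$ forced by $\mu_1\leq V$) produces stability in probability, uniformly in $i$. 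A minor technical point is that $V$ is only assumed $C^2$ on $D\setminus\{0\}$, which I would handle by additionally stopping at the first time $|X|$ hits $\varepsilon$ and letting $\varepsilon\downarrow 0$ at the end.

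For the asymptotic stability assertion, the same localization together with $\LL V\leq -\mu_2(x)$ strengthens the previous identity to
$$\E_{x,i}\!\int_0^{\tau_h}\!\mu_2(X(s))\,ds\;\leq\;V(x,i)<\infty$$
after monotone convergence. Since $\mu_2$ is continuous and vanishes only at $0$, on $\{\tau_h=\infty\}$ the paths cannot remain outside any neighborhood of $0$ eventually, so $\liminf_{t\to\infty}|X(t)|=0$ almost surely on this event. I would upgrade $\liminf$ to $\lim$ by a standard iteration at successive passage times of $|X|$ between thin spherical shells $\{|x|=\delta\}$ and $\{|x|=\delta'\}$: the strong Markov property together with the stability estimate already established makes the probability of infinitely many such oscillations zero by Borel--Cantelli.

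The chief obstacle is that every estimate must be uniform in the discrete initial state $i\in\N$, while the countable state space allows $V(x,i)$ to be unbounded in $i$ and renders $M_1,M_2$ only local martingales. The double stopping by $\tau_h$ and $\tau_N$, underwritten by the (local) uniform intensity bound, is precisely what keeps the classical argument alive; the remainder is the technical bookkeeping carried out in \cite[Section 7.2]{YZ}.
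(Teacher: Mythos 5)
The paper offers no proof of this theorem of its own; it only points to the classical Lyapunov supermartingale argument of \cite[Section 7.2]{YZ}, and your proposal reconstructs essentially that argument: double localization by the exit time $\tau_h$ and a level-$N$ time for $\alpha$, the Dynkin/It\^o inequality $\E_{x,i}V(X(\tau_h\wedge\tau_N\wedge t),\alpha(\tau_h\wedge\tau_N\wedge t))\le V(x,i)$, the Chebyshev-type bound $\PP_{x,i}(\tau_h<\infty)\le V(x,i)/\mu_h$ with $\mu_h=\inf_{|y|=h}\mu_1(y)>0$, and, for the second assertion, the bound $\E_{x,i}\int_0^{\tau_h}\mu_2(X(s))\,ds\le V(x,i)$ followed by the shell/Borel--Cantelli upgrade of $\liminf|X(t)|=0$ to $\lim_{t\to\infty}X(t)=0$. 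The structure is the standard one and the localization issues (local martingales, lack of smoothness of $V$ at the origin) are handled correctly.

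One step, however, does not follow from the stated hypotheses. You assert that $V(0,i)=0$ is ``forced by $\mu_1\le V$'' and that continuity of $V$ at $0$ then yields stability in probability \emph{uniformly in} $i$. The inequality $\mu_1(x)\le V(x,i)$ is a lower bound, so it only gives $V(0,i)\ge 0$; and even granting $V(0,i)=0$, continuity of $V(\cdot,i)$ for each fixed $i$ gives $V(x,i)\to 0$ as $x\to 0$ pointwise in $i$, not $\sup_{i\in\N}V(x,i)\to 0$. Since the paper's definition of stability in probability takes $\inf_{i\in\N}$ over the discrete initial state, your final estimate $\PP_{x,i}(\tau_h<\infty)\le V(x,i)/\mu_h$ delivers the conclusion only if $\sup_{i}V(x,i)\to 0$ as $x\to 0$. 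This requires an additional hypothesis of the form $V(x,i)\le\mu_3(x)$ with $\mu_3$ continuous and vanishing only at $0$ (which is how the standard statement reads, and which does hold for the functions $U(x,i)=(1-p\gamma_i)V^p(x)$ that the paper actually feeds into this theorem, $\{\gamma_i\}$ being bounded). As the hypotheses are written, $V$ may be unbounded in $i$ for fixed $x\ne 0$ and the uniformity claim would fail; you should either add this upper bound or note explicitly that the conclusion is non-uniform in $i$ without it. The remainder of your argument is sound.
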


Let $\hat\alpha(t)$ be the Markov chain with bounded generator $Q(0)$
and transition probability $\hat p_{ij}(t)$

%[??In the definition below, change $p_{ij}(t)$ to $\hat p_{ij}(t)$??]
\begin{defn}
The Markov chain $\hat\alpha(t)$ is said to be
\begin{itemize}
  \item {\it ergodic}, if it has an invariant probability measure $\bnu=(\nu_1,\nu_2,\dots)$ and
  $$\lim_{t\to\infty}\hat p_{ij}(t)=\nu_j\text{ for any } i,j\in\N$$
or equivalently,
  $$\lim_{t\to\infty}\sum_{j\in\N}|\hat p_{ij}(t)-\nu_j|=0\text{ for any } i\in\N,$$
  \item {\it strongly ergodic}, if
  $$\lim_{t\to\infty}\sup_{i\in\N}\left\{\sum_{j\in\N}|\hat p_{ij}(t)-\nu_j|\right\}=0.$$
  \item {\it strongly exponentially ergodic}, if there exist $C>0$ and $\lambda>0$ such that
  \begin{equation}\label{see}
  \sum_{j\in\N}|\hat p_{ij}(t)-\pi_j|\leq Ce^{-\lambda t}\text{ for any } i\in\N, t\geq0.\end{equation}
\end{itemize}
\end{defn}
We refer to \cite{WA} for some properties and sufficient conditions for
%these types of
the aforementioned ergodicity.

\section{Certain Practical Conditions for Stability and Instability}\label{sec:sta}
For each $h>0$, denote by $B_h\subset\R^n$ the open ball centered at $0$ with radius $h$.
Throughout this section, let $D$ be a neighborhood of $0$ satisfying $D\subset B_1$.
We also denote by $\hat\alpha(t)$ the continuous-time Markov chain with generator $Q(0)$.
Denote by $\LL_i$ the generator of the diffusion when the discrete component is in state $i$, that is,
$$\LL_iV(x)=\nabla V(x)b(x,i)+\dfrac12\trace\Big(\nabla^2 V(x)A(x,i)\Big).$$
We first state a theorem, which generalizes \cite[Theorem 4.3]{KZY}, a result
for switching diffusions when the switching takes values in a finite set.

\begin{thm}\label{thm3.1}
Suppose that the Markov chain $\hat\alpha(t)$
is strongly ergodic with invariant probability measure $\bnu=(\nu_1,\nu_2,\dots)$
and that
\begin{equation}\label{e1-thm3.1}
\sup_{i\in\N} \sum_{j\ne i}|q_{ij}(x)-q_{ij}(0)|\to 0 \,\text{ as }\, x\to0.
\end{equation}
Let  $D$ be a neighborhood of $0$ and $V:D\mapsto\R_+$
%[??$V\in C(D,\R_+)$??]
satisfying
that $V(x)=0$ if and only if $x=0$
and that $V(x)$ is continuous on $D$, twice continuously differentiable in $D\setminus\{0\}$.
Suppose that there is a bounded sequence of real numbers $\{c_i: i\in\N\}$
such that
\begin{equation}\label{e2-thm3.1}
\LL_i V(x)\leq c_iV(x) \,\forall\, x\in D\setminus\{0\}.
\end{equation}
Then, if $\sum_{i\in\N} c_i\nu_i<0$,
the trivial solution is asymptotic stable in probability.
\end{thm}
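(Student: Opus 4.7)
The strategy is to build a ``corrected'' Lyapunov function of the form $W(x,i)=V^{\theta}(x)\phi_i$, where $\theta\in(0,1)$ is a small parameter and the positive bounded sequence $\phi_i=1+\theta\gamma_i$ is obtained by solving the Poisson equation for the Markov chain $\hat\alpha(t)$. For $|x|$ small, a direct computation will give $\LL W(x,i)\le -\lambda W(x,i)$ for some $\lambda>0$, and then Theorem~\ref{thm2.1} applies with $\mu_1(x)=\tfrac12 V^{\theta}(x)$ and $\mu_2(x)=\tfrac\lambda2 V^{\theta}(x)$.

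\textbf{Step 1 (Poisson equation).} Write $\bar c:=\sum_{i}\nu_i c_i<0$. First I would upgrade ``strongly ergodic'' to exponentially strong ergodicity by a semigroup contraction argument: since $T_t:\ell^{\infty}\to\ell^{\infty}$, $T_tf(i)=\sum_j \hat p_{ij}(t)f_j$, satisfies $\|T_t-\bnu\|_{\ell^{\infty}\to\ell^{\infty}}=\sup_i\sum_j|\hat p_{ij}(t)-\nu_j|\to 0$, pick $t_0$ with $\|T_{t_0}-\bnu\|\le1/2$; using $\bnu T_t=T_t\bnu=\bnu$ one gets $(T_{t_0}-\bnu)^n=T_{nt_0}-\bnu$, hence exponential decay uniform in $i$. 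Consequently
\[
\gamma_i:=\int_0^{\infty}\Bigl(\sum_j \hat p_{ij}(t)c_j-\bar c\Bigr)\,dt
\]
is well-defined and bounded: $\|\gamma\|_{\infty}\le\|c-\bar c\mathbf1\|_{\infty}\int_0^{\infty}\sup_i\sum_j|\hat p_{ij}(t)-\nu_j|\,dt<\infty$. A standard differentiation gives $(Q(0)\gamma)_i=\bar c-c_i$ for every $i\in\N$.

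\textbf{Step 2 (Lyapunov inequality).} Fix $\theta\in(0,1)$ small and set $\phi_i:=1+\theta\gamma_i$, so $\tfrac12\le\phi_i\le\tfrac32$ for all $i$. For $W(x,i):=V^{\theta}(x)\phi_i$ and $x\in D\setminus\{0\}$, the identity
\[
\LL_i V^{\theta}=\theta V^{\theta-1}\LL_i V+\tfrac{\theta(\theta-1)}{2}V^{\theta-2}(\nabla V)^{\top}A(x,i)\nabla V
\]
together with $\theta<1$ and $\LL_i V\le c_i V$ yields $\LL_i V^{\theta}\le\theta c_i V^{\theta}$. Combining with the switching part,
\[
\LL W(x,i)\le V^{\theta}(x)\bigl[\theta c_i\phi_i+(Q(x)\phi)_i\bigr]
=V^{\theta}(x)\bigl[\theta\bar c+\theta^2 c_i\gamma_i+\theta\varepsilon_i(x)\bigr],
\]
where $\varepsilon_i(x):=\big((Q(x)-Q(0))\gamma\big)_i$ satisfies $\sup_i|\varepsilon_i(x)|\le 2\|\gamma\|_{\infty}\sup_i\sum_{j\ne i}|q_{ij}(x)-q_{ij}(0)|\to 0$ as $x\to 0$ by \eqref{e1-thm3.1}. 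Now I fix $\theta>0$ so small that $\theta\|c\|_{\infty}\|\gamma\|_{\infty}<|\bar c|/4$, then choose $\delta>0$ with $B_\delta\subset D$ and $\sup_i|\varepsilon_i(x)|<|\bar c|/4$ on $B_\delta$. This produces
\[
\LL W(x,i)\le\tfrac{\theta\bar c}{2}V^{\theta}(x)\le -\lambda W(x,i),\qquad x\in B_\delta\setminus\{0\},\ i\in\N,
\]
with $\lambda:=-\theta\bar c/3>0$ (after absorbing $\phi_i\le3/2$).

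\textbf{Step 3 (Conclusion via Theorem~\ref{thm2.1}).} The function $W(x,i)$ is continuous on $B_\delta$, $C^2$ in $x$ on $B_\delta\setminus\{0\}$, vanishes only at $x=0$, and satisfies $W(x,i)\ge\tfrac12 V^{\theta}(x)=:\mu_1(x)$, while $\LL W(x,i)\le -\tfrac{\lambda}{2}V^{\theta}(x)=:-\mu_2(x)$. Both $\mu_1,\mu_2$ are continuous on $B_\delta$ and vanish only at $0$. Theorem~\ref{thm2.1} (asymptotic part, applied on the neighborhood $B_\delta$) then delivers asymptotic stability in probability of the trivial solution.

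\textbf{Expected main obstacle.} The finite-state analogue in \cite{KZY} is essentially the same computation, so the real work is to make everything uniform in $i$ over the countable set $\N$. Specifically, one must (a) extract from the qualitative hypothesis ``strongly ergodic'' a quantitative, $i$-uniform exponential rate in order to justify the Poisson representation of $\gamma$, and (b) exploit exactly \eqref{e1-thm3.1}, which is the right strength of hypothesis to make $\sup_i|((Q(x)-Q(0))\gamma)_i|$ small; a merely pointwise assumption $q_{ij}(x)\to q_{ij}(0)$ would not suffice. Once these two uniformities are in place, the rest is algebra.
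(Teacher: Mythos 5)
Your proposal is correct and follows essentially the same route as the paper: both build a corrected Lyapunov function of the form $(1\pm\theta\gamma_i)V^{\theta}(x)$ with $\{\gamma_i\}$ a bounded solution of the Poisson equation for $Q(0)$, use \eqref{e1-thm3.1} to absorb the $Q(x)-Q(0)$ error, take the exponent small to control the quadratic and cross terms, and conclude via Theorem~\ref{thm2.1}. The one added value in your write-up is Step 1's contraction argument upgrading strong ergodicity to a uniform exponential rate, which justifies the integral formula for $\gamma$ under the theorem's stated hypothesis, whereas the paper invokes its Lemma~\ref{lm-a1}, formally stated only for strongly exponentially ergodic chains.
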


\begin{proof}
Let $\lambda=-\sum_{i\in\N} c_i\nu_i$.
Since $\sum_{i\in\N}\nu_i=1$,
we have $\sum_{i\in\N} (c_i+\lambda)\nu_i=0$.
Since $\hat\alpha(t)$ is strongly ergodic,
it follows from Lemma \ref{lm-a1} that there exists
a bounded  sequence of real numbers $\{\gamma_i: i\in\N\}$
such that
\begin{equation}\label{e3-thm3.1}
\sum_{j\in\N} q_{ij}(0)\gamma_j=\lambda+c_i \text{ for any } i\in\N
\end{equation}
%Let $M>\sup\{|\gamma_i|: i\in\N\}$ and $\tilde\gamma_i:=M+\gamma_i>0, i\in\N$.
Since $\sum_{j\in\N} q_{ij}(0)=0$ for any $i\in\N$ it follows from \eqref{e3-thm3.1}
that
\begin{equation}\label{e4-thm3.1}
\sum_{j\in\N} q_{ij}(0)\tilde\gamma_j=\sum_{j\in\N} q_{ij}(0)(1-p\gamma_j)=-p(\lambda+c_i) \text{ for any } i\in\N
\end{equation}
%Suppose that $\lambda\ne0$.
Since $\{\gamma_i\}$ is bounded,
we can choose $p\in(0,1)$ such that
\begin{equation}\label{e5-thm3.1}
p|\gamma_i|\leq \min\{0.25\lambda, 0.5\}
\end{equation}
In view of \eqref{e1-thm3.1} and \eqref{e5-thm3.1}, there is an
$h>0$  sufficiently small such that
\begin{equation}\label{e6-thm3.1}
\sum_{j\in\N}(1-p\gamma_j)|q_{ij}(x)-q_{ij}(0)|<\dfrac{p\lambda}4\quad\,\forall\,x\in B_h.
\end{equation}
Define the function $U(x,i):B_h\times\N\mapsto\R_+$ by $U(x,i)=(1-p\gamma_i)V^p(x)$.
By It\^o's formula, \eqref{e1-thm3.1}, \eqref{e4-thm3.1}, and \eqref{e6-thm3.1}, we have
\begin{equation}\label{e7-thm3.1}
\begin{aligned}
\LL U(x,i)=&p(1-p\gamma_i)V^{p-1}\LL_i V(x)-\dfrac{p(1-p)}2V^{p-2}\left|V_x(x)\sigma(x,i)\right|^2+V^p(x)\sum_{j\in\N}(1-p\gamma_j)q_{ij}(x)\\
\leq &c_ip(1-p\gamma_i)V^{p-1}+V^p(x)\sum_{j\in\N}(1-p\gamma_j)q_{ij}(0)+V^p(x)\sum_{j\in\N}(1-p\gamma_j)|q_{ij}(x)-q_{ij}(0)|\\
\leq &c_ip(1-p\gamma_i)V^{p-1}-p(\lambda+c_i)V^p(x)+V^p(x)\sum_{j\in\N}(1-p\gamma_j)|q_{ij}(x)-q_{ij}(0)|\\
\leq &p(-\lambda-p\gamma_i)V^p(x)+V^p(x)\sum_{j\in\N}(1-p\gamma_j)|q_{ij}(x)-q_{ij}(0)|\\
\leq &-0.75p\lambda V^p(x)+0.25p\lambda V^p(x)=-0.5p\lambda V^p(x)\,\text{ for } (x,i)\in B_h\times\N.
\end{aligned}
\end{equation}
%[??$g(x,i)$ above??]
By Theorem \ref{thm2.1}, it follows from \eqref{e7-thm3.1} that
the trivial solution is asymptotically stable.
\end{proof}
The hypothesis of this theorem seems to be restrictive.
It requires the strongly exponential ergodicity of $Q(0)$
and the uniform convergence to 0 of the sum $\sum_{j\ne i}|q_{ij}(x)-q_{ij}(0)|$.
To treat cases in which $Q(0)$ is strongly ergodic (not exponentially ergodic) or even only ergodic,
as well as to relax the condition \eqref{e1-thm3.1},
we need a more complicated method.
Our method, which is inspired by the idea in \cite{BL}, utilizes the ergodicity of $Q(0)$
and the analysis of the Laplace transform.
Similar techniques of using the Laplace transform can  also be seen
in the large deviations theory and related applications \cite{PB, ZWYJ}.
We also take a step further by estimating the pathwise rate of convergence
of solutions.

%\begin{lm}\label{lm3.3}
%Let $f:\N\mapsto\R$ be a bounded function.
%For any $\eps, T>0$, there exists $\delta=\delta(\eps, T)$
%such that
%$$\big|\E_{x,i}f(\alpha(t))-\E_{\0,i}f(\alpha(t))\big|<\eps\,\forall\, %t\in[0,T]\,\text{ for all }\, |x|<\delta, i\in\N$$
%\end{lm}
Let $\Gamma$ be a family of increasing and continuously differentiable functions $g:\R_+\mapsto\R_+$
such that $g(y)=0$ iff $y=0$.
Since $\frac{dg}{dy}(y)$
%$\dot g(y)$
is
bounded on $[0,1]$ and $g(0)=0$,
it is easy to show that
the function
\begin{equation}\label{e-G}
G(y):=-\int_y^h\dfrac{dz}{g(z)}\quad\text{ on }\,[0,h]
\end{equation}
is non-positive and strictly decreasing and
$\lim_{y\to0}G(y)=-\infty$.
Its inverse $G^{-1}:(-\infty,0]\mapsto(0,h]$ satisfies
$$
\lim_{t\to\infty}G^{-1}(-t)=0.
$$
We state some assumptions to be used in what follows; we will also provide
%that will be used in the section
some lemmas whose proofs are relegated to the appendix.

\begin{asp}\label{asp3.3}
There are functions $g\in \Gamma,$
$V:D\mapsto\R_+$ such that
\begin{itemize}
\item $V(x)=0$ if and only if $x=0$
\item $V(x)$ is continuous on $D$ and twice continuously differentiable in $D\setminus\{0\}$.
\item there is a bounded  sequence of real numbers $\{c_i: i\in\N\}$
such that
\begin{equation}\label{e2-thm3.3}
\LL_i V(x)\leq c_ig(V(x)) \,\forall\, x\in D\setminus\{0\}.
\end{equation}
\end{itemize}
\end{asp}

\begin{lm}\label{lm3.2}
Under Assumption \ref{asp3.3},
For any $\eps, T, h>0$, there exists an $\tilde h=\tilde h(\eps, T, h)$
such that
$$\PP_{x,i}\{\tau_h\geq T\}<\eps,\quad\text{for all }\, (x,i)\in B_{\tilde h}\times\N$$
where
$\tau_h=\inf\{t\geq 0: |X(t)|\geq h\}$.
\end{lm}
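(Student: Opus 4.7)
The stated inequality $\PP_{x,i}\{\tau_h\geq T\}<\varepsilon$ cannot hold literally as written: under the standing hypothesis $b(0,i)=\sigma(0,i)=0$, the constant trajectory $X(t)\equiv 0$ is a solution when $X(0)=0$, so $\tau_h=\infty$ and $\PP_{0,i}\{\tau_h\geq T\}=1$ for every $\tilde h>0$, contradicting the claim (since $0\in B_{\tilde h}$). I therefore read the displayed line as a typo for
$$\PP_{x,i}\{\tau_h<T\}<\varepsilon\quad\text{for all }(x,i)\in B_{\tilde h}\times\N,$$
the natural continuity-of-paths estimate one needs as a stepping stone for the stability theorems that follow. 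My plan is to prove this corrected form.

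The approach is to bootstrap Assumption \ref{asp3.3} to a linear Lyapunov inequality on a small ball. First I would shrink $h$, if necessary, so that $\overline{B_h}\subset D$; this is harmless because $\{\tau_h<T\}\subseteq\{\tau_{h'}<T\}$ whenever $h'\leq h$, and so the bound for a smaller radius implies the bound for the original. Set $V_{\max}:=\max_{|x|\leq h}V(x)$ and $V_{\min}:=\min_{|y|=h}V(y)>0$ (both finite by continuity and the fact that $V(x)>0$ off the origin). Shrink $h$ further so that $V_{\max}\leq 1$; with $L:=\sup_{[0,V_{\max}]}g'<\infty$ and $K:=\sup_i|c_i|<\infty$, the mean value theorem yields $g(y)\leq Ly$ on $[0,V_{\max}]$, so Assumption \ref{asp3.3} upgrades to the regime-uniform estimate
$$\LL_i V(x)\leq K'V(x),\qquad K':=KL,\qquad(x,i)\in(B_h\setminus\{0\})\times\N.$$

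Next, apply It\^o's formula to $V(X(t))$. Because $V$ is independent of the discrete component, the switching jumps contribute nothing and $\LL V(x,i)=\LL_i V(x)$. Consequently $e^{-K't}V(X(t))$ is a nonnegative local supermartingale; stopped at $\tau_h$ it is bounded by $V_{\max}$, and optional sampling (after a standard localization of $\nabla V\sigma$ via $\tau_n:=\inf\{t:|\nabla V(X(t))\sigma(X(t),\alpha(t))|\geq n\}$ and passage $n\to\infty$) yields
$$\E_{x,i}\bigl[e^{-K'(T\wedge\tau_h)}V(X(T\wedge\tau_h))\bigr]\leq V(x).$$
Path continuity of $X$ forces $|X(\tau_h)|=h$ on $\{\tau_h\leq T\}$, hence $V(X(\tau_h))\geq V_{\min}$ and $e^{-K'(T\wedge\tau_h)}\geq e^{-K'T}$. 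Dropping the nonnegative contribution from $\{\tau_h>T\}$ and rearranging produces the uniform-in-$i$ estimate
$$\PP_{x,i}\{\tau_h\leq T\}\leq\frac{e^{K'T}V(x)}{V_{\min}}.$$

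To close, choose $\tilde h\in(0,h)$ so small that $\max_{|x|\leq\tilde h}V(x)<\varepsilon V_{\min}e^{-K'T}$, which is possible by continuity of $V$ at the origin together with $V(0)=0$. The preceding estimate then delivers $\PP_{x,i}\{\tau_h<T\}<\varepsilon$ uniformly in $i\in\N$ for every $(x,i)\in B_{\tilde h}\times\N$. The main conceptual point is the linearization $g(V)\leq LV$ on a suitably small ball, which reduces the general-$g$ hypothesis to the classical $g(y)=y$ case; after this, the supermartingale step is standard, and the only technical nuisance is the routine localization needed to convert the local martingale into a true one.
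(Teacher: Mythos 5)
Your proof is correct and follows essentially the same route as the paper's: linearize $g(z)\leq K_g z$ near the origin to upgrade Assumption \ref{asp3.3} to the regime-uniform bound $\LL_i V\leq K'V$, apply It\^o's formula (the paper closes with Gronwall where you use the exponential supermartingale — an equivalent step) to get $\E_{x,i}V(X(T\wedge\tau_h))\leq e^{K'T}V(x)$, and conclude by Chebyshev on the event $\{|X(\tau_h)|=h\}$ together with $V(0)=0$ and continuity of $V$. Your reading of the conclusion as $\PP_{x,i}\{\tau_h<T\}<\eps$ is indeed the intended one; that is exactly the form in which the lemma is invoked in the paper, e.g.\ in \eqref{e10-thm3.2}.
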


\begin{lm}\label{lm-a3}
Let $Y$ be a random variable, $\theta_0>0$ a constant, and suppose $$\E \exp(\theta_0 Y)+\E \exp(-\theta_0 Y)\leq K_1.$$
Then the log-Laplace transform
$\phi(\theta)=\ln\E\exp(\theta Y)$
is twice differentiable on $\left[0,\frac{\theta_0}2\right)$ and
$$\dfrac{d\phi}{d\theta}(0)= \E Y,\quad\text{ and }\,0\leq \dfrac{d^2\phi}{d\theta^2}(\theta)\leq K_2\,, \theta\in\left[0,\frac{\theta_0}2\right)$$
 for some $K_2>0$.
As a result of Taylor's expansion,
we have
$$
\phi(\theta)\leq\theta\E Y +\theta^2 K_2,\,\,\text{ for }\theta\in[0,0.5\theta_0).
$$
\end{lm}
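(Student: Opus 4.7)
The plan is to bootstrap from the hypothesis $\E\exp(\theta_0 Y)+\E\exp(-\theta_0 Y)\le K_1$ the finiteness of all polynomial-times-exponential moments of $Y$ on $[0,\theta_0/2)$, justify differentiation under the expectation for $M(\theta):=\E\exp(\theta Y)$, and then transfer the estimates to $\phi=\ln M$. First I would observe that $e^{\theta_0|y|}\le e^{\theta_0 y}+e^{-\theta_0 y}$, so the hypothesis immediately gives $\E e^{\theta_0|Y|}\le K_1$. Since $x^k e^{-\theta_0 x/2}$ is bounded on $[0,\infty)$, there is a constant $C_k$ with $|y|^k e^{\theta y}\le C_k e^{\theta_0|y|}$ for every $\theta\in[0,\theta_0/2)$, whence $\E[|Y|^k e^{\theta Y}]\le C_k K_1<\infty$. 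Having a dominating envelope that is integrable uniformly on compact subsets of $[0,\theta_0/2)$, dominated convergence yields that $M$ is twice differentiable there with $M'(\theta)=\E[Y e^{\theta Y}]$ and $M''(\theta)=\E[Y^2 e^{\theta Y}]$.

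Since $M(\theta)>0$, the composition $\phi=\ln M$ inherits $C^2$-smoothness, and straightforward differentiation gives
\[
\phi'(\theta)=\frac{M'(\theta)}{M(\theta)},\qquad \phi''(\theta)=\frac{M''(\theta)}{M(\theta)}-\left(\frac{M'(\theta)}{M(\theta)}\right)^2.
\]
Evaluating at $\theta=0$ yields $M(0)=1$ and $\phi'(0)=M'(0)=\E Y$. For the non-negativity $\phi''\ge 0$, I would either recognize $\phi''(\theta)$ as the variance of $Y$ under the tilted law with density $e^{\theta Y}/M(\theta)$, or apply Cauchy--Schwarz to the factorization $Y e^{\theta Y}=(Y e^{\theta Y/2})\cdot e^{\theta Y/2}$, which gives $(M'(\theta))^2\le M''(\theta)M(\theta)$ directly.

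For the uniform upper bound $\phi''(\theta)\le K_2$, I would drop the non-positive term $-(M'/M)^2$ and bound $M''/M$ by controlling numerator and denominator separately. The numerator is handled by the earlier estimate $\E[Y^2 e^{\theta Y}]\le C_2 K_1$. For the denominator, Jensen's inequality gives $M(\theta)\ge e^{\theta\,\E Y}$, and because $|y|\le \theta_0^{-1}e^{\theta_0|y|}$ yields $|\E Y|\le \E|Y|\le K_1/\theta_0$, we obtain $M(\theta)\ge \exp(-K_1/2)$ on $[0,\theta_0/2)$, a strictly positive constant depending only on $K_1$ and $\theta_0$. Combining the two bounds gives $K_2$. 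The Taylor-expansion conclusion then follows from $\phi(0)=0$, $\phi'(0)=\E Y$, and the Lagrange form $\phi(\theta)=\theta\E Y+\tfrac{\theta^2}{2}\phi''(\xi)$ for some $\xi\in(0,\theta)$, after absorbing the factor $\tfrac12$ into the constant $K_2$.

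The only mildly delicate step is the uniform positive lower bound on $M(\theta)$; once exponential integrability is used to bound $\E|Y|$, Jensen closes the argument, and the remaining manipulations are standard. I would not expect a serious obstacle here, as this is essentially the classical smoothness and convexity of the cumulant generating function on the interior of its domain of finiteness.
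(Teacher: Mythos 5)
Your argument is correct and follows essentially the same route as the paper's proof: dominate $|Y|^k e^{\theta Y}$ by a constant multiple of $e^{\theta_0 Y}+e^{-\theta_0 Y}$, differentiate under the expectation by dominated convergence, obtain $\phi''\ge 0$ from Cauchy--Schwarz and $\phi''\le K_2$ by bounding $\E[Y^2e^{\theta Y}]$ above and $\E[e^{\theta Y}]$ below via Jensen, then conclude by Taylor's theorem. Your version only makes a couple of steps slightly more explicit (the uniform lower bound $M(\theta)\ge e^{-K_1/2}$ via $|\E Y|\le K_1/\theta_0$), but there is no substantive difference.
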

\begin{lm}\label{lm3.3}
Under the assumption $b(0,i)=0,\sigma(0,i)=0, i\in\N$, we have
$$\PP_{x,i}\left\{X(t)=0\,\text{ for some }\, t\geq0\right\}=0
\,\text{ for any }\, x\ne 0, i\in\N.$$
\end{lm}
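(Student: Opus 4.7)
The plan is to apply It\^o's formula to a suitable Lyapunov function that blows up at the origin, specifically $U(x) = -\log|x|$, and to exploit the fact that $U$ does not depend on the discrete component. Two immediate simplifications make this choice convenient: the generator reduces to $\LL U(x,i) = \LL_i U(x)$, and the jump-part local martingale $M_2$ in \eqref{eq:M1-M2} vanishes identically, so no analysis of the Poisson measure is needed.

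For arbitrary $x \ne 0$, $i_0 \in \N$, $T > 0$ and $H > |x|$, I would introduce three nested localizing stopping times
$$\tau_\delta = \inf\{t : |X(t)| \leq \delta\}, \quad \sigma_H = \inf\{t : |X(t)| \geq H\}, \quad \beta_N = \inf\{t : \alpha(t) > N\},$$
with $0 < \delta < |x|$ and $N \geq i_0$, and work up to $\theta = \tau_\delta \wedge \sigma_H \wedge \beta_N \wedge T$. The key a priori estimate I would establish is that $\LL U(x,i)$ is bounded by a constant $K_{N,H}$ on $\{0 < |x| \leq H,\, i \leq N\}$. Using $b(0,i) = 0$, $\sigma(0,i) = 0$ and the local Lipschitz bounds in Assumption \ref{asp4.1} or \ref{asp4.2}, one has $|b(x,i)| \leq L_{i,H}|x|$ and $|\sigma(x,i)|^2 \leq L_{i,H}^2 |x|^2$, which combined with $\nabla U(x) = -x/|x|^2$ and $\nabla^2 U(x) = -I/|x|^2 + 2xx^\top/|x|^4$ yields the desired constant in terms of $L_N^H := \max_{i \leq N} L_{i,H}$.

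Next, I would apply It\^o's formula to $U(X(t\wedge\theta))$ and take expectations. The continuous martingale $M_1$ stopped at $\theta$ is a genuine martingale since $|\nabla U| \leq 1/\delta$ before $\tau_\delta$ and $\sigma$ is bounded there; $M_2$ vanishes as noted. This gives $\E_{x,i_0}[U(X(\theta))] \leq U(x) + K_{N,H} T$. Splitting on $\{\tau_\delta \leq \sigma_H \wedge \beta_N \wedge T\}$ (where $U(X(\theta)) = -\log\delta$) and its complement (where $U(X(\theta)) \geq -\log H$), I would deduce
$$\PP_{x,i_0}(\tau_\delta \leq \sigma_H \wedge \beta_N \wedge T) \leq \frac{U(x) + K_{N,H} T + \log H}{-\log \delta} \xrightarrow[\delta \downarrow 0]{} 0,$$
so $X$ cannot hit $0$ before $\sigma_H \wedge \beta_N \wedge T$.

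To finish, I would send $T, H, N \to \infty$ in that order. Nonexplosion of $X$ (guaranteed by \cite{DY1} under either assumption) gives $\sigma_H \uparrow \infty$ a.s., and the (locally) bounded total jump intensity in Assumption \ref{asp4.1}(3) or \ref{asp4.2}(3) ensures that $\alpha$ makes only finitely many jumps on each compact time interval, so $\beta_N \uparrow \infty$ a.s. The main subtlety I expect is that $L_{i,H}$ may grow in $i$ under Assumption \ref{asp4.1}, so $K_{N,H}$ genuinely depends on $N$; the role of the localization by $\beta_N$ is precisely to decouple this dependence from the $\delta \downarrow 0$ limit, which must be taken before $N \to \infty$.
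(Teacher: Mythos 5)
Your argument is correct, but it is not the route the paper takes. The paper's proof reduces the problem to the pure-diffusion case: it lets $\bar\tau_n$ denote the $n$-th jump time of $\alpha(t)$, observes that on each interval $[\bar\tau_{n-1},\bar\tau_n]$ the process $X(t)$ is an ordinary diffusion with a frozen discrete state and coefficients vanishing at $0$, invokes the classical non-attainability result \cite[Lemma 4.3.2]{XM} on each such interval, and then concludes using $\bar\tau_n\to\infty$ a.s.\ from \cite{DY1}. You instead run the Lyapunov argument directly on the coupled process with $U(x)=-\log|x|$, exploiting the fact that a switching-independent test function kills both the $Q(x)$-term in the generator and the jump martingale $M_2$, and you localize in the discrete variable via $\beta_N$ to control the $i$-dependence of the Lipschitz constants. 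Your key estimate --- that the blow-up of $\nabla U$ and $\nabla^2 U$ at the origin is exactly compensated by $|b(x,i)|\leq L_{i,H}|x|$ and $|\sigma(x,i)|\leq L_{i,H}|x|$ --- is sound, and the order of limits ($\delta\downarrow0$ before $N\to\infty$) is handled correctly; the regularity facts you need ($\sigma_H\uparrow\infty$, $\beta_N\uparrow\infty$) are exactly those established in \cite{DY1}. What the paper's approach buys is brevity, since the analytic work is outsourced to Mao's lemma; what yours buys is a self-contained proof that makes explicit why the switching structure is harmless here (the test function does not see the discrete component), and it would extend without change to settings where the inter-jump dynamics are not literally a classical diffusion covered by the cited lemma. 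The only cosmetic gap is that $U$ is not $C^2$ at the origin, so strictly you should apply It\^o's formula to a global $C^2$ modification of $U$ agreeing with $-\log|x|$ on $\{|x|\geq\delta\}$; this is routine and does not affect the argument.
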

With the auxiliary results above, we can prove our main results.

\begin{thm}\label{thm3.2}
Suppose that the Markov chain $\hat\alpha(t)$
%admitting
%with generator $Q_0$
%as its generator
is ergodic with invariant probability measure $\bnu=(\nu_1,\nu_2,\dots)$
and Assumption  \ref{asp3.3}
%\eqref{asp3.3}
is satisfied with
additional conditions:
\begin{equation}\label{e3-thm3.2}
\limsup_{i\to\infty}c_i<0,
\end{equation}
and
\begin{equation}\label{e4-thm3.2}
M_g:=\sup_{0<|x|<h, i\in\N}\left\{\left|\dfrac{V_x(x)\sigma(x,i)}{g(V(x))}\right|\right\}<\infty.
\end{equation}
Then, if $\sum_{i\in\N} c_i\nu_i<0$,
the trivial solution is asymptotic stable in probability, that is,
for any $h>0$ such that $B_h\subset D$,
and $\eps>0$, there exists $\delta=\delta(h,\eps)>0$ such that
$$\PP_{x,i}\left\{X(t)<h \,\forall\,t\geq0,\quad\text{and}\quad \lim_{t\to\infty}X(t)=0\right\}>1-\eps\,\text{ for any } (x,i)\in B_\delta\times\N.$$
Moreover,  there is a $\lambda>0$ such that
\begin{equation}\label{e-rate}
\PP_{x,i}\left\{\lim_{t\to\infty} \dfrac{V(X(t))}{G^{-1}(-\lambda t)}\leq 1\right\}>1-\eps\,\text{ for any } (x,i)\in B_\delta\times\N.
\end{equation}
\end{thm}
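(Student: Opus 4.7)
My plan is to introduce an auxiliary Lyapunov function $\tilde V(x,i)=G(V(x))-\gamma_i$ on $(D\setminus\{0\})\times\N$, where $(\gamma_i)$ solves a Poisson equation associated with $Q(0)$, and then to propagate the resulting deterministic drift estimate through the stochastic integrals by combining the generalized It\^o formula with the Laplace-transform Lemma~\ref{lm-a3}. Set $\lambda:=-\sum_i c_i\nu_i>0$. Ergodicity of $\hat\alpha$ together with $\limsup_i c_i<0$ (which here plays the role that strong ergodicity played in Theorem~\ref{thm3.1}) should permit solving, via an adaptation of the Poisson-equation lemma (Lemma~\ref{lm-a1}) relegated to the appendix,
\[
\sum_{j\in\N}q_{ij}(0)\gamma_j=c_i+\lambda,\qquad i\in\N,
\]
for a bounded sequence $\{\gamma_i\}$. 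A direct computation using $G'(y)=1/g(y)$, $G''(y)=-g'(y)/g(y)^2\le 0$, the hypothesis \eqref{e2-thm3.3}, and the elementary identity $\sum_{j\ne i}q_{ij}(x)(\gamma_i-\gamma_j)=-\sum_j q_{ij}(x)\gamma_j$ then yields
\[
\LL\tilde V(x,i)\le -\lambda-\eps_i(x),\qquad \eps_i(x):=\sum_{j\in\N}\bigl(q_{ij}(x)-q_{ij}(0)\bigr)\gamma_j,
\]
so that modulo the perturbation $\eps_i(\cdot)$, $\tilde V$ drifts down at rate $\lambda$.

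Next, with $\tau_h:=\inf\{t\ge 0:|X(t)|\ge h\}$, the generalized It\^o formula gives
\[
\tilde V(X(t\wedge\tau_h),\alpha(t\wedge\tau_h))\le \tilde V(x,i)-\lambda(t\wedge\tau_h)-\int_0^{t\wedge\tau_h}\eps_{\alpha(s)}(X(s))\,ds+N^{(1)}_{t\wedge\tau_h}+N^{(2)}_{t\wedge\tau_h},
\]
where $N^{(1)}$ is the continuous Brownian martingale whose bracket is dominated by $M_g^2\,t$ via \eqref{e4-thm3.2}, and $N^{(2)}$ is the compensated-Poisson jump martingale arising from $-\gamma_{\alpha(\cdot)}$, with jumps bounded by $2\|\gamma\|_\infty$ and compensator rate bounded by $M$. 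Applying Lemma~\ref{lm-a3} to the stochastic exponentials of $N^{(1)}$ and $N^{(2)}$ (i.e., taking Laplace transforms of these bounded-bracket martingales) and then a Chernoff-type inequality, I expect exponential-in-$T$ tail estimates of the form $\PP_{x,i}\{\sup_{t\le T}(N^{(1)}_{t\wedge\tau_h}+N^{(2)}_{t\wedge\tau_h})\ge \eta T\}\le Ce^{-cT}$, with $C,c>0$ depending only on $\eta,M_g,M,\|\gamma\|_\infty$; by Borel--Cantelli this gives $\limsup_{t\to\infty}(N^{(1)}_t+N^{(2)}_t)/t\le\eta$ almost surely on $\{\tau_h=\infty\}$, for any preselected $\eta>0$.

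The main obstacle is controlling $\int_0^{t\wedge\tau_h}\eps_{\alpha(s)}(X(s))\,ds$: pointwise $\eps_i(x)\to 0$ as $x\to 0$ (by continuity of $q_{ij}$ together with the uniform summability $\sum_j q_{ij}(x)\le M$), but the convergence is \emph{not} uniform in $i$, because we have intentionally dropped the uniform hypothesis \eqref{e1-thm3.1} of Theorem~\ref{thm3.1}. I plan to bypass this by a truncation-plus-compactness argument. Using $\limsup c_i<0$, pick $N_0$ so large that $c_i\le -\beta<0$ for $i>N_0$, and modify $\gamma$ to be supported essentially on $\{1,\dots,N_0\}$—so that $\eps_i(x)$ reduces to a finite sum of continuous functions of $x$, which can be made uniformly smaller than $\lambda/4$ on a small ball $B_h$—while for $i>N_0$ the leftover contribution is absorbed into the genuinely negative drift $-\beta$ already available there. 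Once $|\eps_i(x)|\le\lambda/4$ uniformly on $B_h\times\N$ is secured, Theorem~\ref{thm2.1} combined with Lemmas~\ref{lm3.2} and \ref{lm3.3} delivers stability in probability (i.e., $\PP_{x,i}(\tau_h=\infty)>1-\eps$ for $|x|$ small), and on $\{\tau_h=\infty\}$ the estimates above give $\tilde V(X(t),\alpha(t))\le -(\lambda/2)t+o(t)$ almost surely. Since $\gamma$ is bounded, this forces $G(V(X(t)))\le -\lambda' t$ eventually a.s. for some $\lambda'\in(0,\lambda/2)$, and applying the monotone decreasing inverse $G^{-1}$ to both sides yields the pathwise bound \eqref{e-rate}.
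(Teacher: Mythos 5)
Your proposal rests on solving the Poisson equation $\sum_j q_{ij}(0)\gamma_j=c_i+\lambda$ with a \emph{bounded} solution $\{\gamma_i\}$, but this is exactly what is unavailable under the hypotheses of Theorem~\ref{thm3.2}. Lemma~\ref{lm-a1} produces a bounded solution only under \emph{strong exponential} ergodicity (the integral $\int_0^\infty[\bnu\mathbf{b}-\hat P(t)\mathbf{b}]\,dt$ defining $\gamma$ converges, uniformly in $i$, precisely because of \eqref{see}); under mere ergodicity this integral need not converge, and $\limsup_i c_i<0$ does not repair this. Your truncation fix does not work as stated: if you force $\gamma_j=0$ for $j>N_0$, the identity $\sum_j q_{ij}(0)\gamma_j=c_i+\lambda$ fails for $i\le N_0$ whenever the chain can jump from $\{1,\dots,N_0\}$ to its complement, so the drift inequality $\LL\tilde V\le-\lambda-\eps_i(x)$ collapses on the very states where you need it. A second, independent gap is the uniform control of $\eps_i(x)=\sum_j(q_{ij}(x)-q_{ij}(0))\gamma_j$: even with $\gamma$ supported on finitely many $j$, you need $\sup_{i\in\N}\sum_{j\le N_0}|q_{ij}(x)-q_{ij}(0)|\to0$ as $x\to0$, which is a uniform-in-$i$ condition of the same nature as \eqref{e1-thm3.1} and does not follow from the pointwise continuity of the $q_{ij}$ guaranteed by Assumptions~\ref{asp4.1}--\ref{asp4.2}. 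In effect your argument is the proof of Theorem~\ref{thm3.1} (the $\gamma_i$-corrected Lyapunov function) transplanted to $G(V)$, and it would prove the theorem only under the additional hypotheses of strong exponential ergodicity and \eqref{e1-thm3.1} that Theorem~\ref{thm3.2} is specifically designed to drop.

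The paper circumvents both obstacles by never solving a Poisson equation: it keeps $H(t)=\int_0^{\tau_h\wedge t}c(\alpha(s))\,ds+\text{(martingale)}$, uses ergodicity of $\hat\alpha$ only to bound the \emph{time average} $\E_{i}\int_0^t c(\hat\alpha(s))\,ds\le-\tfrac{3\lambda_1}{4}t$ for $t\ge T$ and $i$ in a \emph{finite} set $\{1,\dots,k_0\}$ (chosen via $\sum_{i\le k_0}c_i\nu_i<0$), transfers this to the switching process by the Feller property on that finite set of initial regimes, and handles all $i>k_0$ by the pointwise negativity $c_i\le-2\lambda_2$ through the stopping times $\xi$ and $\zeta$ and the inequality $\LL_iU\le-\theta\lambda_2U$. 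The log-Laplace Lemma~\ref{lm-a3} is then applied to $H(t)$ itself to convert the first-moment drift into a bound on $\E e^{\theta H(t)}$, and the supermartingale $U(X(kT_2\wedge\tau_h))\wedge\Delta$ yields stability. If you want to salvage your route, you would need to either assume strong exponential ergodicity plus a uniform rate-convergence condition, or replace the Poisson-equation correction by the paper's finite-truncation-in-$i$ of the \emph{ergodic average} rather than of the corrector $\gamma$.
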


\begin{rem}{\rm
Before proceeding to the proof of the theorem, let us  make a brief comment.
In addition to providing sufficient conditions for asymptotic stability, a significant new element here is the rate of convergence given in \eqref{e-rate}.
Although there are numerous treatment of stochastic stability by a host of authors
for diffusions and switching diffusions. The rate result in Theorem \ref{thm3.2} appears to the first one of its kind.
}
\end{rem}

\begin{proof}$\text{}$
The proof is divided into
%there
two steps.
We first show the trivial solution is stable in probability
and then we prove asymptotic stability
%stable.
and  estimate the path-wise convergence rate.

{\bf Step 1: Stability.}

Let $h>0$ such that $B_h\subset D$. Since $\{c_i\}$ is bounded,
%then
\begin{equation}\label{e5-thm3.2}
\lim_{k\to\infty}\sum_{i\leq k} c_i\nu_i=\sum_{i\in\N} c_i\nu_i<0.
\end{equation}
This and \eqref{e3-thm3.2}
show that there exists $k_0\in\N$ such that
$$-\lambda_1:=\sum_{i\leq k_0} c_i\nu_i<0$$
and
$$-2\lambda_2:=\sup_{i>k_0}c_i<0.$$
Let $\bar c=\sup_{i\in\N}|c_i|$ and $m_0$ be an positive integer satisfying
$m_0\lambda_2>\bar c+M_g+1$.
Define
$G(y)=-\int_y^hg^{-1}(z)dz.$
In view of Lemma \ref{lm3.3}, if $X(0)\ne 0$, then $X(t)\ne 0$ a.s,
which leads to $g(V(X(t))\ne 0$ a.s.
Thus, we have from It\^o's formula and the increasing property of $g(\cdot)$ that
\begin{equation}\label{e6-thm3.2}
\begin{aligned}
G\big(V(X(\tau_h\wedge t))\big)=&G(V(x))+
\int_0^{\tau_h\wedge t}\dfrac{\LL_{\alpha(s)} V(X(s))}{g(V(X(s)))}ds\\
&-\int_0^{\tau_h\wedge t}\dfrac{\dfrac{dg}{dy}(V(X(s)))\Big|V_x(X(s))\sigma(X(s),\alpha(s))\Big|^2}{2g^2(V(X(s)))}ds\\
&+\int_0^{\tau_h\wedge t}\dfrac{V_x(X(s))\sigma(X(s),\alpha(s))}{g(V(X(s)))}dW(s)
\leq G(V(x))+H(t),
\end{aligned}
\end{equation}
where
$$
\begin{aligned}
H(t)=&\int_0^{\tau_h\wedge t}c(\alpha(s))ds%-\int_0^{\tau_h\wedge t}\dfrac{\dot g(V(X(s)))|V_x(X(s))\sigma(X(s),\alpha(s))|^2}{2g^2(V(X(s)))}ds\\&
+\int_0^{\tau_h\wedge t}\dfrac{V_x(X(s))\sigma(X(s),\alpha(s))}{g(V(X(s)))}dW(s).
\end{aligned}
$$
By It\^o's formula,
\begin{equation}\label{e-eH}
\begin{aligned}
e^{\theta H(t)}=& 1+\int_0^{t\wedge\tau_h}e^{\theta H(s)}\left[\theta c(\alpha(s))+\dfrac{\theta^2}2\dfrac{|V_x(X(s))\sigma(X(s),\alpha(s))|^2}{2g^2(V(X(s)))}\right]ds\\
\\&+
\theta\int_0^{t\wedge\tau_h}e^{\theta H(s)}\dfrac{V_x(X(s))\sigma(X(s),\alpha(s))}{g(V(X(s)))}dW(s),
\end{aligned}
\end{equation}
which leads to
$$
\begin{aligned}
\E_{x,i}e^{\theta H(t)}
=&1+\E_{x,i}\int_0^{\tau_h\wedge t}e^{\theta H(s)}\left[\theta c(\alpha(s))+\dfrac{\theta^2}2\dfrac{|V_x(X(s))\sigma(X(s),\alpha(s))|^2}{2g^2(V(X(s)))}\right]ds\\
\leq&1+ [\bar c+M_g]\E_{x,i}\int_0^{\tau_h\wedge t}e^{\theta H(s)}ds\\
\leq&1+[\bar c+M_g]\int_0^{t}\E_{x,i}e^{\theta H(s)}ds.
\end{aligned}
$$
In view of Gronwall's inequality, for any $t\geq0$ and $(x,i)\in B_h\times\N$, we have
\begin{equation}\label{e12-thm3.2}
\E_{x,i}e^{\theta H(t)}\leq e^{\theta[\bar c+M_g]t},\, \theta\in[-1,1].
\end{equation}
On the other hand,
we have
\begin{equation}\label{e7-thm3.2}
\begin{aligned}
\E_{x,i}H(t)&\leq\E_{x,i}\int_0^{\tau_h\wedge t}c(\alpha(s))ds\\
&\leq\E_{x,i}\int_0^{t}c(\alpha(s))ds-\E_{x,i}\int_{\tau_h\wedge t}^tc(\alpha(s))ds\\
&\leq\E_{x,i}\int_0^{t}c(\alpha(s))ds+t\bar c\PP_{x,i}\{\tau_h<t\}.
\end{aligned}
\end{equation}
Because of the ergodicity of $\hat\alpha(t)$,
there exists a $T>0$ depending on $k_0$ such that
\begin{equation}\label{e8-thm3.2}
%\E_{0,i}
\E_{0,i}\int_0^{t}c(\alpha(s))ds=\E_{i}\int_0^{t} c(\hat\alpha(s))ds\leq -\dfrac{3\lambda_1}4t\quad\,\forall\, t\geq T, i\leq k_0.
\end{equation}
%[??In the above, should it be  $c(\hat \alpha(s))$ than $\hat c(\alpha(s))$??]
By the Feller property of $(X(t),\alpha(t))$
there exists an $h_1\in(0,h)$ such that
\begin{equation}\label{e9-thm3.2}
\E_{x,i}\int_0^{t}c(\alpha(s))ds\leq -\dfrac{\lambda_1}2t\quad\,\forall\, t\in [T,T_2], |x|\leq h_1, i\leq k_0,
\end{equation}
where $T_2=(m_0+1)T$.
In view of Lemma \ref{lm3.2},
there exists an $h_2\in(0,h_1)$ such that
\begin{equation}\label{e10-thm3.2}
\bar c\PP_{x,i}\{\tau_h<m_0T\}\leq \dfrac{\lambda_1}4\,\text{ provided } |x|\leq h_2, i\in\N.
\end{equation}
Applying \eqref{e9-thm3.2} and \eqref{e10-thm3.2} to \eqref{e7-thm3.2},
we obtain
\begin{equation}\label{e11-thm3.2}
\E_{x,i}H(t)\leq -\dfrac{\lambda_1}4t \,\text{ if }\, 0<|x|\leq h_2, i\leq k_0, t\in[T,T_2].
\end{equation}
By Lemma \ref{lm-a3}, it follows from \eqref{e12-thm3.2} and \eqref{e11-thm3.2} that for $\theta\in[0,0.5], 0<|x|<h_2, i\leq k_0, t\in[T,T_2]$, we have
\begin{equation}\label{e13-thm3.2}
\begin{aligned}
\ln \E_{x,i}e^{\theta H(t)}\leq& \theta\E_{x,i}H(t)+\theta^2 K\\
\leq&-\theta\dfrac{\lambda_1 t}4+\theta^2 K
\end{aligned}
\end{equation}
for some $K>0$ depending on $T_2, \bar c$ and $M_g$.
Let $\theta\in(0,0.5]$ such that
\begin{equation}\label{e14-thm3.2}
\theta K<\dfrac{\lambda_1 T}8,\,\text{ and }\, \theta M_g<\lambda_2
\end{equation}
we have
$$
\ln \E_{x,i}e^{\theta H(t)}\leq -\dfrac{\theta\lambda_1 t}8\,\text{ for }0<|x|<h_2, i\leq k_0, t\in[T,T_2]
$$
or equivalently,
\begin{equation}\label{e15-thm3.2}
\E_{x,i}e^{\theta H(t)}\leq \exp\left\{-\dfrac{\theta\lambda_1 t}8\right\}\,\text{ for }0<|x|<h_2, i\leq k_0, t\in[T,T_2].
\end{equation}
In what follows, we fix a $\theta>0$ satisfying \eqref{e15-thm3.2}.
Exponentiating both sides of the inequality
$G(V(X(\tau_h\wedge t)))\leq G(V(x))+H(t)$
we have
 for $0<|x|<h_2, i\leq k_0, t\in[T,T_2]$ that
\begin{equation}\label{e16-thm3.2}
\E_{x,i}U(X(\tau_h\wedge t))
\leq U(x)\E_{x,i}e^{\theta H(t)}
\leq U(x)\exp\left\{-\dfrac{\theta\lambda_1 t}8\right\}.
\end{equation}
where $U(x)=\exp(\theta G(V(x))).$
Since $\lim_{x\to0}G(V(x))=-\infty$
then
\begin{equation}\label{e27-thm3.2}
\lim_{x\to0} U(x)=0.
\end{equation}
Using the inequality $G(V(X(\tau_h\wedge t)))\leq G(V(x))+H(t)$
and \eqref{e12-thm3.2} we have
\begin{equation}\label{e26-thm3.2}
\E_{x,i}U(X(\tau_h\wedge t))
\leq U(x)\exp\left\{\theta[\bar c+M_g]t\right\}, \text{ for all } (x,i)\in B_h\times\N, t\geq0.
\end{equation}
Now,
let $\Delta=\inf\{U(x): h_2\leq |x|\leq h\}>0$.
Define stopping times
$$\xi=\inf\{t\geq0: \alpha(t)\leq k_0\},\quad\text{ and }\, \zeta=\inf\{t\geq0: U(X(t))\geq\Delta\}.$$
Clearly, if $X(0)\in B_h$ then $\zeta\leq\tau_h$ and if $t<\zeta$ then $|X(t)|<h_2$.
By computation and \eqref{e14-thm3.2}, we have
$$
\begin{aligned}
\LL_i U(x)\leq\theta U(x)\left[c_i+[\theta-\dot g(V(x)]\dfrac{|V_x(x)\sigma(x,i)|^2}{g(V(x))}\right]
\leq& \theta(-2\lambda_2+\theta M_g)U(x)\\
\leq& -\theta\lambda_2U(x),\,\text{ for } 0<|x|<h, i>k_0.
\end{aligned}
$$
It follows from  It\^o's formula that
\begin{equation}\label{e17-thm3.2}
\begin{aligned}
\E_{x,i} e^{\theta\lambda_2(t\wedge\xi\wedge\zeta)}U(X(t\wedge\xi)
=&U(x)+\E_{x,i}\int_0^{t\wedge\xi\wedge\zeta}e^{\lambda_2s}\left[\theta\lambda_2U(X(s))+\LL_{\alpha(t)}U(X(s))\right]ds\\
\leq &U(x),\,\text{ for } 0<|x|<h, i\in\N.
\end{aligned}
\end{equation}
We have the following estimate for $0<|x|<h, i>k_0$.
\begin{equation}\label{e18-thm3.2}
\begin{aligned}
\E_{x,i} e^{\theta\lambda_2(T_2\wedge\xi\wedge\zeta)}U(X(T_2\wedge\xi\wedge\zeta))
=&\E_{x,i} \1_{\{\xi\wedge\zeta<m_0T\}}e^{\theta\lambda_2(T_2\wedge\xi\wedge\zeta)}U(X(T_2\wedge\xi\wedge\zeta))\\
&+\E_{x,i} \1_{\{m_0T\leq \xi\wedge\zeta<T_2\}}e^{\theta\lambda_2(T_2\wedge\xi\wedge\zeta)}U(X(T_2\wedge\xi\wedge\zeta))\\
&+\E_{x,i} \1_{\{\xi\wedge\zeta\geq T_2\}}e^{\theta\lambda_2(T_2\wedge\xi\wedge\zeta)}U(X(T_2\wedge\xi\wedge\zeta))\\
\geq&\E_{x,i} \1_{\{\xi\wedge\zeta\leq m_0T\}}U(X(\xi\wedge\zeta))\\
&+e^{\theta\lambda_2m_0T}\E_{x,i} \1_{\{m_0T\leq \xi\wedge\zeta<T_2\}}U(X(\xi\wedge\zeta))\\
&+e^{\theta\lambda_2T_2}\E_{x,i} \1_{\{\xi\geq T_2\}}U(X(T_2)).
\end{aligned}
\end{equation}
Since $\PP_{x,i}\{\zeta=0\}=1$ if $i\leq k_0$,
\eqref{e18-thm3.2} holds for $0<|x|<h, i\in\N$.
Noting that $U(x)\wedge\Delta\leq\Delta$ for any $x\in B_h$, we have
$$\E \Big[U(X(T_2\wedge\tau_h))\wedge\Delta\Big|\zeta<m_0T,\zeta\leq\xi\Big]\leq \Delta\leq U(X(\zeta))=U(X(\xi\wedge\zeta)).
$$
If $\xi<\zeta$, then $U(X(\xi))<\Delta$.
By strong Markov property of $(X(t),\alpha(t))$, \eqref{e16-thm3.2}, and \eqref{e12-thm3.2}, we have
$$
\E \Big[U(X(T_2\wedge\tau_h))\wedge\Delta\Big|\xi<m_0T\wedge\zeta\Big]\leq U(X(\xi))=U(X(\xi\wedge\zeta))$$
and
$$\E \Big[U(X(T_2\wedge\tau_h))\wedge\Delta\Big|m_0T\leq\xi<T_2\wedge\zeta\Big]\leq U(X(\xi))e^{\theta (\bar c+M_g)T}=U(X(\xi\wedge\zeta))e^{\theta (\bar c+M_g)T}.$$
From the three estimates above, we have
\begin{equation}\label{e19-thm3.2}
\begin{aligned}
\E_{x,i}\1_{\{\xi\wedge\zeta\leq m_0T\}}\big[U(X(T_2\wedge\tau_h))\wedge\Delta\big]
=&\E_{x,i}\1_{\{\zeta<m_0T, \zeta\leq \xi\}}\big[U(X(T_2\wedge\tau_h))\wedge\Delta\big]\\
&+\E_{x,i}\1_{\{\xi<m_0T\wedge\zeta\}}\big[U(X(T_2\wedge\tau_h))\wedge\Delta\big]\\
\leq&  \E_{x,i}\1_{\{\xi\wedge\zeta<m_0T\}}U(X(\xi\wedge\zeta)).
\end{aligned}
\end{equation}
Similarly,
\begin{equation}\label{e20-thm3.2}
\begin{aligned}
\E_{x,i} \1_{\{m_0T\leq \xi\wedge\zeta<T_2\}}\big[U(X(T_2\wedge\tau_h))\wedge\Delta\big]
\leq
&e^{\theta(\bar c+M_g)T}\E_{x,i}\1_{\{m_0T\leq \xi\wedge\zeta<T_2\}}U(X(\xi\wedge\zeta))\\
\leq& e^{\theta\lambda_2m_0T}\E_{x,i}\1_{\{m_0T\leq \xi\wedge\zeta<T_2\}} U(X(\xi\wedge\zeta)),
\end{aligned}
\end{equation}
where the last line follows from $m_0\lambda_2>\bar c+M_g+1$.
Applying \eqref{e19-thm3.2} and
\eqref{e20-thm3.2} to \eqref{e18-thm3.2}, we obtain
\begin{equation*}%\label{e21-thm3.2}
\begin{aligned}
\E_{x,i} \big[U(X(T_2\wedge\tau_h))\wedge\Delta\big]\leq U(x)\, \text{ for any }(x,i)\in B_h\times\Z.
\end{aligned}
\end{equation*}
Since $\E_{x,i} \big[U(X(T_2\wedge\tau_h))\wedge\Delta\big]\leq \Delta$,
we have
\begin{equation}\label{e21-thm3.2}
\begin{aligned}
\E_{x,i} \big[U(X(T_2\wedge\tau_h))\wedge\Delta\big]\leq U(x)\wedge\Delta\, \text{  for any }(x,i)\in B_h\times\Z.
\end{aligned}
\end{equation}
This together with the Markov property of $(X(t),\alpha(t))$ implies
that $\{M(k):=\big[U(X(kT_2\wedge\tau_h))\wedge\Delta\big],k\in\N\}$
is a super-martingale.
Let $\eta=\inf\{k\in\N: M(k)=\Delta\}$.
Clearly, $\{\eta<\infty\}\supset\{\tau_h<\infty\}$.
For any $\eps>0$,
if $U(x)<\eps\Delta$ we have that
\begin{equation}\label{e36-thm3.2}
\PP_{x,i}\{\eta<k\}\leq \dfrac{\E_{x,i}M(\eta\wedge k)}\Delta\leq \dfrac{U(x)}{\Delta}\leq\eps.
\end{equation}
Letting $k\to\infty$ we have $$\PP_{x,i}\{\tau_h<\infty\}\leq\PP_{x,i}\{\eta<\infty\}\leq\eps,\,\text{ if }\, U(x)<\eps\Delta.$$
We complete the proof of this step by noting that
$\{x: U(x)<\eps\Delta\}$ is a neighborhood of $x$
due to the fact that $\lim_{x\to0}U(x)=0$.

{\bf Step 2: Asymptotic stability and pathwise convergence rate.}

To prove the asymptotic stability in probability,
we fix $h>0$ and define $U(x), T_2,m_0,\Delta$ depending on $h$ as in the first step.
By virtue of \eqref{e18-thm3.2}, we have
\begin{equation}\label{e22-thm3.2}
\begin{aligned}
\E_{x,i} e^{\theta\lambda_2(T_2\wedge\xi\wedge\zeta)}U(X(T_2\wedge\xi\wedge\zeta))
\geq&\E_{x,i} \1_{\{\xi\wedge\zeta<m_0T\}}U(X(\xi\wedge\zeta))\\
&+e^{\theta\lambda_2m_0T}\E_{x,i} \1_{\{m_0T\leq \xi\wedge\zeta<T_2\}}U(X(\xi\wedge\zeta))\\
&+e^{\theta\lambda_2T_2}\E_{x,i} \1_{\{\xi\wedge\zeta\geq T_2\}}U(X(T_2))\\
\geq&\E_{x,i} \1_{\{\xi<m_0T,\zeta>\xi\}}U(X(\xi))\\
&+e^{\theta\lambda_2m_0T}\E_{x,i} \1_{\{m_0T\leq \xi<T_2,\zeta>\xi\}}U(X(\xi))\\
&+e^{\theta\lambda_2T_2}\E_{x,i} \1_{\{\xi\wedge\zeta\geq T_2\}}U(X(T_2)).
\end{aligned}
\end{equation}
Recalling that $\zeta\leq\tau_h$ and $X(t)<h_2$ if $t<\zeta$, we have from \eqref{e15-thm3.2} and \eqref{e26-thm3.2} that
\begin{equation}\label{e23-thm3.2}
\begin{aligned}
\E_{x,i} \1_{\{\zeta\geq T_2\}}\1_{\{\xi<m_0T\}}U(X(T_2))
\leq &\E_{x,i} \1_{\{\xi<\zeta\}}\1_{\{\xi<m_0T\}}U(X(T_2\wedge\tau_h))\\
\leq&\E_{x,i}\left[\1_{\{\xi<m_0T\wedge\zeta\}}U(X(\xi))\exp\left\{-\theta\dfrac{\lambda}8(T_2-\xi)\right\}\right]\\
\leq&\exp\left\{-\dfrac{\theta\lambda T}8\right\}\E_{x,i}\left[\1_{\{\zeta\geq \xi\}}\1_{\{\xi<m_0T\}}U(X(\xi))\right]\\
\end{aligned}
\end{equation}
and
\begin{equation}\label{e24-thm3.2}
\begin{aligned}
\E_{x,i} \1_{\{m_0T\leq \xi<T_2,\zeta>T_2\}}U(X(T_2))
\leq &\E_{x,i} \1_{\{m_0T\leq \xi<T_2\wedge\zeta\}}U(X(T_2\wedge\tau_h))\\
\leq&\E_{x,i}\left[\1_{\{m_0T\leq \xi<T_2\wedge\zeta\}}U(X(\xi))\exp\left\{\theta(\bar c+M_g)(T_2-\xi)\right\}\right]\\
\leq&\exp\left\{\theta(\bar c+M_g)T\right\}\E_{x,i}\left[\1_{\{m_0T\leq \xi<T_2\wedge\zeta\}}U(X(\xi))\right]\\
\leq&\exp\{-\theta T\}\exp\left\{\theta\lambda_2m_0T\right\}\E_{x,i}\left[\1_{\{m_0T\leq \xi<T_2\wedge\zeta\}}U(X(\xi))\right].
\end{aligned}
\end{equation}
On the other hand, we can write
\begin{equation}\label{e25-thm3.2}
\begin{aligned}
\E_{x,i} \1_{\{\xi\wedge\zeta\geq T_2\}}U(X(T_2))= e^{-\theta\lambda_2T_2}e^{\theta\lambda_2T_2}\E_{x,i} \1_{\{\xi\wedge\zeta\geq T_2\}}
U(X(T_2)).
\end{aligned}
\end{equation}
Letting $p=\max\left\{\exp\left\{-\dfrac{\theta\lambda T}8\right\}, \exp\{-\theta T\}, \exp\{-\theta\lambda_2T_2\}\right\}<1$
and adding \eqref{e23-thm3.2}, \eqref{e24-thm3.2}, and \eqref{e24-thm3.2} side by side and then using \eqref{e22-thm3.2} we have
$$\E_{x,i} \1_{\{\zeta\geq T_2\}}U(X(T_2))\leq p U(x),\text{ for } (x,i)\in B_h\times\N.$$
By the strong Markov property of the process $(X(t),\alpha(t))$,
$$
\begin{aligned}
\E_{x,i} \1_{\{\zeta\geq 2T_2\}}U(X(2T_2))=&\E_{x,i}\left[\1_{\{\zeta\geq T_2\}}\E_{X(T_2),\alpha(T_2)} \1_{\{\zeta\geq T_2\}}U(X(T_2))\right]\\
\leq& p\E_{x,i} \1_{\{\zeta\geq T_2\}}U(X(T_2))\\
\leq& p^2 U(x),\text{ for } (x,i)\in B_h\times\N.
\end{aligned}
$$
Continuing this way we have
$$\E_{x,i} \1_{\{\zeta\geq kT_2\}}U(X(kT_2))\leq p^k U(x),\text{ for } (x,i)\in B_h\times\N.$$
Since $2\theta<1$, we have from \eqref{e12-thm3.2} that
$\E_{x,i} e^{2\theta H(s)}\leq e^{2\theta[\bar c+M_g]s}$.
This and  the Burkholder-Davis-Gundy inequality imply
\begin{equation}\label{e28-thm3.2}
\begin{aligned}
\E_{x,i}\sup_{t\leq T_2}&\left|\int_0^{t\wedge\tau_h}e^{\theta H(s)}\dfrac{V_x(X(s))\sigma(X(s),\alpha(s))}{g(V(X(s)))}dW(s)\right|\\
\leq& \left[\E_{x,i}\int_0^{T_2\wedge\tau_h}e^{2\theta H(s)}\dfrac{|V_x(X(s))\sigma(X(s),\alpha(s))|^2}{g^2(V(X(s)))}ds\right]^{\frac12}\\
\leq& \left[M_g^2\E_{x,i}\int_0^{T_2}e^{2\theta H(s)}ds\right]^{\frac12}\\
\leq& \left[M_g^2\int_0^{T_2}e^{2\theta[\bar c+M_g]s}ds\right]^{\frac12}:=\wdt K_1.
\end{aligned}
\end{equation}
On the other hand
\begin{equation}\label{e29-thm3.2}
\begin{aligned}
\E_{x,i}\sup_{t\leq T_2}&\left|\int_0^{t\wedge\tau_h}e^{\theta H(s)}\left[\theta c(\alpha(s))+\dfrac{\theta^2}2\dfrac{|V_x(X(s))\sigma(X(s),\alpha(s))|^2}{2g^2(V(X(s)))}\right]ds\right|\\
\leq& (\bar c+M_g)\E_{x,i}\int_0^{T_2\wedge\tau_h}e^{\theta H(s)}ds\\
\leq& (\bar c+M_g)\int_0^{T_2}e^{\theta [\bar c+M_g]}ds:=\wdt K_2.
\end{aligned}
\end{equation}
It follows from \eqref{e28-thm3.2} and \eqref{e29-thm3.2} that
\begin{equation}\label{e30-thm3.2}
\begin{aligned}
\E_{x,i}\sup_{t\leq T_2} U(X(t\wedge\tau_h))=&U(x)\E_{x,i}\sup_{t\leq T_2} e^{\theta H_t}\\
\leq& U(x)[1+\wdt K_1+\wdt K_2]:=U(x)\wdt K_3.
\end{aligned}
\end{equation}
By the strong Markov property of $(X(t),\alpha(t))$,
we derive from \eqref{e30-thm3.2} that
\begin{equation}\label{e31-thm3.2}
\begin{aligned}
\E_{x,i}&\1_{\{\zeta=\infty\}}\sup_{t\in[kT_2,(k+1)T_2]}U(X(t\wedge\tau_h))\\
&\leq\E_{x,i}\1_{\{\zeta\geq kT_2\}}\sup_{t\in[kT_2,(k+1)T_2]}U(X(t\wedge\tau_h))\\
&\leq\wdt  K_3\E_{x,i}\1_{\{\zeta\geq k T_2\}}U(X(kT_2)\\
&\leq\wdt  K_3U(x)\rho^k,
\end{aligned}
\end{equation}
which combined with Markov's inequality leads to
\begin{equation}\label{e32-thm3.2}
\begin{aligned}
\PP_{x,i}&\left\{\1_{\{\zeta=\infty\}}\sup_{t\in[kT_2,(k+1)T_2]}U(X(t\wedge\tau_h))> (\rho+\wdt\eps)^k\right\}\\
&\leq \dfrac1{(\rho+\wdt\eps)^k}\E_{x,i}\left[\1_{\{\zeta=\infty\}}\sup_{t\in[kT_2,(k+1)T_2]}U(X(t\wedge\tau_h))\right]\\
&\leq\wdt  K_3U(x)\dfrac{\rho^k}{(\rho+\wdt\eps)^k}\,\quad k\in\N,
\end{aligned}
\end{equation}
where $\wdt\eps$ is any number in $(0, 1-\rho)$.
In view of the Borel-Cantelli lemma,
%we see that
for almost all $\omega\in\Omega$,
there exists random integer $k_1=k_1(\omega)$ such that
$$
\1_{\{\zeta=\infty\}}\sup_{t\in[kT_2,(k+1)T_2]}U(X(t))< (\rho+\wdt\eps)^k\,\text{ for any } k\geq k_1.
$$
Thus, for almost all $\omega\in\{\zeta=\infty\},$
we have
\begin{equation}\label{e33-thm3.2}
\begin{aligned}
G(V(X(t)))\leq [t/T_2]\ln (\rho+\wdt\eps)\leq -\lambda t \, \text{ for } t\geq k_1 T_2.
\end{aligned}
\end{equation}
where
$[t/T_2]$ is the integer part of $t/T_2$ and
$\lambda=-\dfrac{\ln (\rho+\wdt\eps)}{2T_2}>0$.
Since $G(y)$ is decreasing and maps $(0,h]$ onto $(-\infty, 0]$,
%[??sending $(0,h]$ onto $(-\infty, 0]$??]
\eqref{e-rate} follows from \eqref{e36-thm3.2} and \eqref{e33-thm3.2}.
\end{proof}

In Theorem \ref{thm3.2},
under the condition that $\alpha(t)$ is merely ergodic,
we need and additional condition \eqref{e3-thm3.2}
to obtain the stability in probability of the system.
If $\alpha(t)$ is strongly ergodic,
the condition \eqref{e3-thm3.2}
can be removed.
\begin{thm}\label{thm3.3}
Suppose that
\begin{itemize}
\item for any $T>0$ and a bounded function $f:\N\mapsto\R$, we have
\begin{equation}\label{e1-thm3.3}
\lim_{x\to0} \sup_{i\in\N}\left\{\left|\E_{x,i}\int_0^Tf(\alpha(s))ds-\E_{i}\int_0^Tf(\hat\alpha(s)ds\right|\right\}=0.
\end{equation}
%[??the second one is $\wdh \alpha(s)$??]
%\item there exists $K>0$ such that
%$|q_{ij}(x)-q_{ij}(0)|\leq K|x|, |b_i(x)|\vee|\sigma_i(x)|\leq K|x|\,\text{ for any }x\in D, i,j\in\N.$
%\item there exists $\bar\kappa\in\N$ such that
%\begin{equation}\label{e3-thm3.3}
%q_{ij}(x)=0\,\text{ for all }\,(x,i)\in D\times\N\,\text{ satisfying }|i-j|>\bar\kappa.
%\end{equation}
\item Assumption \ref{asp3.3} is satisfied
\item
the Markov chain $\hat\alpha(t)$
is strongly ergodic with invariant probability measure $\bnu=(\nu_1,\nu_2,\dots)$.
\end{itemize}
Suppose further that \eqref{e4-thm3.2} is satisfied and $\sum_{i\in\N} c_i\nu_i<0$.
Then the conclusion of Theorem \ref{thm3.2} holds.
\end{thm}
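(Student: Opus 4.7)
The plan is to imitate the architecture of the proof of Theorem \ref{thm3.2}, using strong ergodicity together with the uniform-in-$i$ hypothesis \eqref{e1-thm3.3} to bypass the dichotomy $i\le k_0$ vs.\ $i>k_0$ that was forced in Theorem \ref{thm3.2} by condition \eqref{e3-thm3.2}. All the exponential-moment and Borel--Cantelli machinery of that earlier proof transfers mechanically once the drift of the auxiliary process $H(t)$ is controlled uniformly in the initial discrete state.

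The first and essential step is a uniform-in-$i$ ergodic estimate for $\hat\alpha(t)$. Set $\lambda=-\sum_i c_i\nu_i>0$ and $\bar c=\sup_i|c_i|<\infty$. Strong ergodicity gives $\sup_i\sum_j|\hat p_{ij}(t)-\nu_j|\to 0$, so the boundedness of $\{c_i\}$ yields
$$\sup_{i\in\N}\Big|\E_i c(\hat\alpha(t))+\lambda\Big|\;\le\;\bar c\,\sup_i\sum_{j\in\N}|\hat p_{ij}(t)-\nu_j|\;\longrightarrow\;0\quad\text{as }t\to\infty.$$
Integrating produces $T>0$ with $\sup_i\E_i\int_0^T c(\hat\alpha(s))ds\le -\tfrac{3\lambda}{4}T$. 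Applying hypothesis \eqref{e1-thm3.3} to the bounded function $f(i)=c_i$ then furnishes $h_1>0$ such that
$$\sup_{i\in\N}\E_{x,i}\int_0^T c(\alpha(s))\,ds\le -\frac{\lambda}{2}T\quad\text{for all }|x|\le h_1.$$
This replaces \eqref{e8-thm3.2}--\eqref{e9-thm3.2}; the crucial gain is uniformity over \emph{every} $i\in\N$, not only $i\le k_0$.

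Second, I would reproduce the exponential-moment bounds leading to \eqref{e12-thm3.2} verbatim (they depend only on Assumption \ref{asp3.3} and \eqref{e4-thm3.2}). With the uniform mean bound above, Lemma \ref{lm-a3} then gives, for a suitable small $\theta\in(0,1/2]$, some $T_2\ge T$ and some $h_2\in(0,h_1)$,
$$\E_{x,i}\,U\!\big(X(\tau_h\wedge T_2)\big)\le U(x)\exp\Big(-\tfrac{\theta\lambda T_2}{16}\Big),\quad 0<|x|\le h_2,\ i\in\N,$$
where $U(x)=\exp(\theta G(V(x)))$ and $G(y)=-\int_y^h dz/g(z)$ as in Theorem \ref{thm3.2}. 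Because this decay already holds for every $i$, the stopping time $\xi=\inf\{t:\alpha(t)\le k_0\}$ and the two-regime estimates \eqref{e17-thm3.2}--\eqref{e20-thm3.2} disappear. Setting $\Delta=\inf_{h_2\le|x|\le h}U(x)>0$ and $\zeta=\inf\{t:U(X(t))\ge\Delta\}$, the displayed decay on $\{\zeta\ge T_2\}$ combined with the trivial bound $U\wedge\Delta\le\Delta$ on $\{\zeta<T_2\}$ yields \eqref{e21-thm3.2} directly, so $\{M(k):=U(X(kT_2\wedge\tau_h))\wedge\Delta\}$ is a super-martingale and stability in probability follows from \eqref{e36-thm3.2}.

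For asymptotic stability and the pathwise rate \eqref{e-rate}, the same decay inequality extracts the geometric contraction $\E_{x,i}\1_{\{\zeta\ge T_2\}}U(X(T_2))\le \rho\,U(x)$ with $\rho=\exp(-\theta\lambda T_2/16)<1$, and the Burkholder--Davis--Gundy and Borel--Cantelli estimates \eqref{e28-thm3.2}--\eqref{e33-thm3.2} transfer unchanged, as they never used the split $i\le k_0$/$i>k_0$. The only real technical point is Step~1: exchanging $\sup_i$ with the tail sum $\sum_j c_j(\hat p_{ij}(t)-\nu_j)$ is licensed precisely by the boundedness of $\{c_i\}$ together with the $\ell^1$-type convergence built into strong ergodicity. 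Past that obstacle, Theorem \ref{thm3.3} is a genuine uniform-in-$i$ simplification of Theorem \ref{thm3.2}, and no essentially new idea is required.
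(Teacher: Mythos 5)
Your proposal is correct and follows essentially the same route as the paper: establish the uniform-in-$i$ ergodic bound from strong ergodicity, transfer it to $(X(t),\alpha(t))$ via \eqref{e1-thm3.3} (plus Lemma \ref{lm3.2} to control the $\tau_h$ term), obtain the uniform exponential decay of $\E_{x,i}U(X(T\wedge\tau_h))$ through the log-Laplace estimate of Lemma \ref{lm-a3}, and then run the supermartingale and Borel--Cantelli machinery of Theorem \ref{thm3.2} without the $i\le k_0$ versus $i>k_0$ split. Your observation that the stopping time $\xi$ and the two-regime estimates become unnecessary is exactly the simplification the paper itself points out at the end of its proof.
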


\begin{rem}\label{rem3.3}{\rm
We will prove in the Appendix that  \eqref{e1-thm3.3}
holds if
Assumption
%\eqref{asp3.3}
\ref{asp3.3} and \eqref{e1-thm3.1} hold.
}
\end{rem}

\begin{proof}[Proof of Theorem \ref{thm3.3}]
Let $\lambda=-\sum_{i\in\N} c_i\nu_i$. Because of the uniform ergodicity of $\hat\alpha(t)$,
there exists a $T>0$  such that
\begin{equation}\label{e8-thm3.3}
\E_{0,i}\int_0^{t}c(\alpha(s))ds=\E_{i}\int_0^{t} c(\hat\alpha(s))ds\leq -\dfrac{3\lambda}4t\quad\,\forall\, t\geq T, i\in\N.
\end{equation}
By \eqref{e1-thm3.3},
there exists $h_1\in(0,h)$ such that
\begin{equation}\label{e9-thm3.3}
\E_{x,i}\int_0^{T}c(\alpha(s))ds\leq -\dfrac{\lambda}2T\quad\,\forall\,  |x|\leq h_1, i\in\N.
\end{equation}
In view of Lemma \ref{lm3.2},
there exists $h_2\in(0,h_1)$ such that
\begin{equation}\label{e10-thm3.3}
\bar c\PP_{x,i}\{\tau_h<T\}\leq \dfrac{\lambda}4\,\text{ provided } |x|\leq h_2, i\in\N.
\end{equation}
Applying \eqref{e9-thm3.3} and \eqref{e10-thm3.3} to \eqref{e7-thm3.2}, we have
\begin{equation}\label{e11-thm3.3}
\E_{x,i}H(T)\leq -\dfrac{\lambda}4T \,\text{ if }\, 0<|x|\leq h_2, i\in\N.
\end{equation}
Using \eqref{e11-thm3.3}, we can use arguments in the proof Theorem  \ref{thm3.2} to show that
\begin{equation}\label{e12-thm3.3}
\E_{x,i}e^{\theta H(T)}\leq \exp\left\{-\dfrac{\theta\lambda T}8\right\}\,\text{ for }0<|x|<h_2
\end{equation}
for a sufficiently small $\theta>0$.
This implies that
\begin{equation}\label{e13-thm3.3}
\E_{x,i}U\big(X(T\wedge\tau_h)\big)\leq  \exp\left\{-\dfrac{\theta\lambda T}8\right\} U(x),
\end{equation}
where $U(x)=\exp(\theta G(V(x))).$
Thus, $\{M_k:=U\big(X\big((kT)\wedge\tau_h\big)\big),k=0,1,\dots\}$
is a bounded supermartingale.
Then we can easily obtain the stability in probability of the trivial solution.
Moreover, proceeding as in Step 2 of the proof of Theorem \ref{thm3.2}, we can obtain the asymptotic stability as well as the rate of convergence.
The arguments are actually simpler because \eqref{e13-thm3.3} holds uniformly in $i\in\N$,
rather than $i\in\{1,\dots,k_0\}$ in the proof of Theorem \ref{thm3.2}.
\end{proof}
\begin{rem}{\rm
Consider the special case $g(y)\equiv y$.
With this function we have $U(X(t))=V(X(t))$.
Thus, if Assumption \ref{asp3.3} holds with $g(y)\equiv y$,
then the conclusion on stability in Theorems \ref{thm3.2} and \ref{thm3.3}
are still true without the condition \eqref{e4-thm3.2}
because we still have $\E V(X(t\wedge\tau_h))\leq V(x)e^{\bar ct}$,
which can be used in place of \eqref{e26-thm3.2}.
However, in order to obtain asymptotic stability and rate of convergence,
\eqref{e4-thm3.2} is needed.
In that case, if the initial value is sufficiently closed to $0$,
$V(X(t))$ will converges exponentially fast to $0$ with a large probability.

}
\end{rem}

\begin{thm}\label{thm3.4}
Consider the case that the state space of $\alpha(t)$ is finite, say $\M=\{1,\dots,m_0\}$
 for some positive integer $m_0$, rather than $\N$.
Suppose that $Q(0)$ is irreducible and let $\bnu$ be the invariant probability measure
of the Markov chain with generator $Q(0)$.
If  $\sum_{i\in\M} c_i\nu_i<0$ then
the trivial solution is asymptotically stable in probability, and for any $\eps>0$, there are $\lambda>0,\delta>0$ such that
$$\PP_{x,i}\left\{\lim_{t\to\infty} \dfrac{V(X(t))}{G^{-1}(-\lambda_3t)}\leq 1\right\}>1-\eps\,\text{ for any } (x,i)\in B_\delta\times\N.$$\end{thm}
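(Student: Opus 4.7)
The natural plan is to reduce Theorem \ref{thm3.4} to Theorem \ref{thm3.3} by checking that, in the finite state space setting, all hypotheses of Theorem \ref{thm3.3} are automatically (or almost automatically) verified. Assumption \ref{asp3.3} is implicit in the statement since the constants $c_i$ and the function $V$ enter the conclusion through $G$. The remaining ingredients to verify are: (i) strong ergodicity of $\hat\alpha(t)$, (ii) the uniform convergence \eqref{e1-thm3.3}, and (iii) the boundedness condition \eqref{e4-thm3.2}.

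For (i), I would invoke the standard fact that an irreducible continuous-time Markov chain on a finite state space is strongly exponentially ergodic: since $\M$ is finite, uniformity over $i\in\M$ is automatic, and irreducibility plus finiteness give a spectral gap for $Q(0)$, hence an exponential rate of convergence of $\hat p_{ij}(t)$ to $\nu_j$. For (iii), the supremum in $M_g$ is taken over $i\in\M$, a finite set, so the condition reduces to finiteness of $\sup_{0<|x|<h}|V_x(x)\sigma(x,i)/g(V(x))|$ for each individual $i$; this is the essential regularity assumption needed and it is what the theorem's data implicitly provide.

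The one point that merits a short separate argument is (ii). In the finite-$\M$ setting, the supremum over $i$ in \eqref{e1-thm3.3} is a maximum over finitely many terms, so it suffices to show, for each fixed $i\in\M$ and each bounded $f\colon\M\to\R$, that $\E_{x,i}\int_0^T f(\alpha(s))\,ds \to \E_{i}\int_0^T f(\hat\alpha(s))\,ds$ as $x\to 0$. I would obtain this from the Feller property of $(X(t),\alpha(t))$ stated in Section \ref{sec:for}: the one-dimensional distribution of $\alpha(s)$ under $\PP_{x,i}$ depends continuously on $x$ at $x=0$ (because $q_{ij}(x)\to q_{ij}(0)$ and the sum over $j$ is finite so convergence is automatically uniform in $j$), so $\PP_{x,i}\{\alpha(s)=j\}\to \hat p_{ij}(s)$ pointwise in $s$; then dominated convergence together with boundedness of $f$ and finiteness of $[0,T]$ yields the required convergence. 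This is precisely the argument outlined in Remark \ref{rem3.3}, which is particularly clean in the finite case because \eqref{e1-thm3.1} is automatic from continuity of each $q_{ij}$ and the finiteness of the sum.

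With (i)-(iii) in hand, I would apply Theorem \ref{thm3.3} verbatim to the finite-state switching diffusion, obtaining both asymptotic stability in probability and the pathwise rate estimate $\PP_{x,i}\{\limsup_{t\to\infty}V(X(t))/G^{-1}(-\lambda t)\le 1\}>1-\eps$ for $(x,i)\in B_\delta\times\M$ and a suitable $\lambda>0$. The only mild obstacle is the Feller-continuity step in (ii); everything else is essentially bookkeeping, since finiteness of $\M$ trivializes the uniform-in-$i$ conditions that required genuine work in Theorems \ref{thm3.2} and \ref{thm3.3}. In particular, the condition $\limsup_{i\to\infty}c_i<0$ from \eqref{e3-thm3.2} is vacuous here.
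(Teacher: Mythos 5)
The paper gives no proof of Theorem \ref{thm3.4} at all---it is presented as an immediate corollary of the preceding results in the finite-state case---and your reduction is exactly the intended argument: irreducibility plus finiteness gives strong (exponential) ergodicity, \eqref{e1-thm3.1} and hence (via Remark \ref{rem3.3} and Lemma \ref{lm-a2}) \eqref{e1-thm3.3} are automatic from continuity of the finitely many $q_{ij}$, and the condition $\limsup_{i\to\infty}c_i<0$ is vacuous. Two minor remarks: you could reduce directly to Theorem \ref{thm3.2} (taking $k_0=m_0$ so that the set $\{i>k_0\}$ is empty), which avoids having to verify \eqref{e1-thm3.3} altogether; and you are right to flag that Assumption \ref{asp3.3} together with the per-$i$ finiteness in \eqref{e4-thm3.2} is not automatic even for a single $i$ and must be read as an inherited standing hypothesis of the (somewhat under-specified) theorem statement.
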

We now provide some conditions for instability in probability.
\begin{thm}\label{thm3.5}
Suppose that the Markov chain $\hat\alpha(t)$
%admitting
%with generator $Q(0)$
%as its generator
is ergodic with invariant probability measure $\bnu=(\nu_1,\nu_2,\dots)$
and that there are functions $g\in \Gamma,$
$V: D\mapsto\R_+$ such that
\begin{itemize}
\item $V(x)=0$ if and only if $x=0$
\item $V(x)$ is continuous on $D$ and twice continuously differentiable in $D\setminus\{0\}$.
\item there is a bounded  sequence of real numbers $\{c_i: i\in\N\}$
such that
\begin{equation}%\label{e2-thm3.3}
\LL_i V(x)\geq c_ig(V(x)) \,\forall\, x\in D\setminus\{0\}.
\end{equation}
\end{itemize}
If
\eqref{e4-thm3.2}
is satisfied and if $\sum_{i\in\N} c_i\nu_i<0$
and
$\limsup_{i\to\infty}c_i<0,$
then
the trivial solution is unstable in probability.
\end{thm}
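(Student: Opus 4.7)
The proof will closely parallel that of Theorem~\ref{thm3.2}, running the same It\^o-plus-log-Laplace machinery with the opposite sign convention. The plan is as follows.

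First, apply It\^o's formula to $G(V(X(\tau_h\wedge t)))$, with $G(y)=-\int_y^h dz/g(z)$ as in \eqref{e-G}. Using $\LL_i V(x)\geq c_i g(V(x))$, the bound \eqref{e4-thm3.2}, and the boundedness of $g'$ on $[0,h]$ (by continuity), one obtains
\begin{equation*}
G\bigl(V(X(\tau_h\wedge t))\bigr) \;\geq\; G(V(x)) + \int_0^{\tau_h\wedge t} c(\alpha(s))\,ds \;+\; M(t) \;-\; C_0 (\tau_h\wedge t),
\end{equation*}
where $M(t)=\int_0^{\tau_h\wedge t}\frac{V_x\sigma}{g(V)}\,dW(s)$ has quadratic variation bounded by $M_g^2 t$ and $C_0>0$ absorbs the second-order It\^o correction. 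This is the reverse-direction analogue of \eqref{e6-thm3.2}.

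Next, following the log-Laplace analysis of Theorem~\ref{thm3.2}, introduce
\[
\widetilde U(x)\;=\; 1-\exp\bigl(\theta\, G(V(x))\bigr),
\]
for a small parameter $\theta>0$. Since $G(V(x))\to-\infty$ as $x\to 0$, this function satisfies $\widetilde U(x)\to 1$ as $x\to 0$ and is bounded on $B_h$. Using the ergodicity of $\hat\alpha(t)$ together with $\sum_i c_i\nu_i<0$ and $\limsup_{i\to\infty}c_i<0$, run the argument of Step~1 of Theorem~\ref{thm3.2}---truncating at $k_0$ and invoking the Feller property---to produce constants $T$, $T_2$ and a uniform sub-martingale-type estimate $\E_{x,i}[\widetilde U(X(T_2\wedge\tau_h))\wedge\Delta]\geq \widetilde U(x)\wedge\Delta$ for all $x\in B_{h_2}$, $i\in\N$. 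Iterating this relation along the sequence of times $\{kT_2\}$ and invoking the Borel--Cantelli lemma as in Step~2 of Theorem~\ref{thm3.2} then shows that, with probability bounded away from zero uniformly as $x\to 0$, either $X(t)$ exits $B_h$ or $\widetilde U(X(t))$ stays close to $1$; either alternative contradicts stability in probability.

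The main obstacle will be the wrong-signed It\^o correction, which in Theorem~\ref{thm3.2} was simply discarded as non-positive but here enters the drift on precisely the side that must be controlled. This forces $\theta$ to be chosen small enough that the correction $C_0$ together with the diffusive contribution of order $\theta M_g^2/2$ is dominated by the negative averaged drift supplied by the ergodicity of $\hat\alpha(t)$. A secondary technical point is that $\widetilde U$ must be paired with a truncation (analogous to $U\wedge\Delta$ in Theorem~\ref{thm3.2}) so that the iteration stays in a bounded regime, and the indices $i\leq k_0$ (where one needs the averaged drift estimate under $\PP_{x,i}$, using the Feller property to transfer it from the pure chain $\hat\alpha$) must be handled separately from $i>k_0$ (where $\limsup c_i<0$ provides a direct pointwise drop in $\widetilde U$).
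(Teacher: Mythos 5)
Your first step---the pathwise It\^o inequality for $G(V(X(\tau_h\wedge t)))$ with the second-order correction absorbed into a term $-C_0(\tau_h\wedge t)$---is sound and matches the starting point of the paper's instability proof. The argument breaks down, however, at the choice of test function and the direction of the martingale inequality. You set $\wdt U(x)=1-\exp(\theta G(V(x)))$ and claim a submartingale estimate $\E_{x,i}[\wdt U(X(T_2\wedge\tau_h))\wedge\Delta]\geq \wdt U(x)\wedge\Delta$. That is equivalent to the supermartingale estimate $\E_{x,i}\exp(\theta G(V(X(T_2\wedge\tau_h))))\leq\exp(\theta G(V(x)))$, which is precisely the \emph{stability} estimate \eqref{e16-thm3.2} of Theorem \ref{thm3.2} and needs the upper bound $\LL_iV\leq c_ig(V)$. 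Under the present hypothesis $\LL_iV\geq c_ig(V)$, It\^o's formula yields only
$$\exp\bigl(\theta G(V(X(\tau_h\wedge t)))\bigr)\;\geq\;\exp\bigl(\theta G(V(x))\bigr)\exp\Bigl(\theta\Bigl[\int_0^{\tau_h\wedge t}c(\alpha(s))\,ds+M(t)-C_0(\tau_h\wedge t)\Bigr]\Bigr),$$
a \emph{lower} bound on $\exp(\theta G(V(X)))$; no upper bound on its expectation, hence no lower bound on $\E\wdt U(X)$, can be extracted from it. Moreover, even if your estimate held, your concluding alternative ``$\wdt U(X(t))$ stays close to $1$'' means $G(V(X(t)))$ stays very negative, i.e.\ $X(t)$ stays near the origin---that is consistent with stability and contradicts nothing. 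A bounded function maximal at the origin together with a submartingale property pins the process near the origin; it is a stability device, not an instability one.

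The paper's proof uses the opposite architecture: it works with $-G$ and with $\wdt U(x)=\exp\{-\wdt\theta G(V(x))\}$, which \emph{blows up} as $x\to0$, and runs the log-Laplace machinery of Theorem \ref{thm3.2} on $\wdt H(t)=-\int_0^{\tau_h\wedge t}c(\alpha(s))\,ds-\int_0^{\tau_h\wedge t}\frac{V_x\sigma}{g(V)}\,dW$ to obtain geometric decay, $\E_{x,i}\1_{\{\wdt\zeta\geq kT_3\}}\wdt U(X(kT_3))\leq p^k\wdt U(x)$ with $p<1$. Since $\wdt U\geq\wdt\Delta^{-1}$ on $\{\wdt\zeta\geq k\}$, this forces $\PP_{x,i}\{\wdt\zeta=\infty\}=0$: almost surely $\wdt U(X(\cdot))$ drops below the fixed level $\wdt\Delta^{-1}$, i.e.\ $V(X(\cdot))$ exceeds a fixed positive threshold, which is exactly what instability requires. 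To salvage your route, replace $1-\exp(\theta G(V))$ by $\exp(-\wdt\theta G(V))$ and aim for decay of its expectation rather than growth; doing so will also make you confront the sign of the averaged drift, since one needs $\E\wdt H(t)<0$, i.e.\ the ergodic average of $c(\hat\alpha(\cdot))$ to be positive, which is worth reconciling carefully with the stated condition $\sum_ic_i\nu_i<0$.
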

\begin{proof}
Define
$G(y)=-\int_y^hg^{-1}(z)dz$ as in Theorem \ref{thm3.2}.
We have from It\^o's formula,
\begin{equation}\label{e6-thm3.2}
\begin{aligned}
-G\big(V(X(\tau_h\wedge t))\big)=&-G(V(x))-
\int_0^{\tau_h\wedge t}\dfrac{\LL_{\alpha(s)} V(X(s))}{g(V(X(s)))}ds\\
&+\int_0^{\tau_h\wedge t}\dfrac{\dot g(V(X(s)))|V_x(X(s))\sigma(X(s),\alpha(s))|^2}{2g^2(V(X(s)))}ds\\
&-\int_0^{\tau_h\wedge t}\dfrac{V_x(X(s))\sigma(X(s),\alpha(s))}{g(V(X(s)))}dW(s)
\leq -G(V(x))+\wdt H(t)
\end{aligned}
\end{equation}
where
$$
\begin{aligned}
\wdt H(t)=&-\int_0^{\tau_h\wedge t}c(\alpha(s))ds%-\int_0^{\tau_h\wedge t}\dfrac{\dot g(V(X(s)))|V_x(X(s))\sigma(X(s),\alpha(s))|^2}{2g^2(V(X(s)))}ds\\&
-\int_0^{\tau_h\wedge t}\dfrac{V_x(X(s))\sigma(X(s),\alpha(s))}{g(V(X(s)))}dW(s).
\end{aligned}
$$
Then using \eqref{e6-thm3.2} and
proceeding in the same manner as in the proof of Theorem \ref{thm3.2}
with $H(t)$ replaced with $\wdt H(t)$,
we have can find a sufficiently small $\wdt\theta, \wdt\Delta>0$ and a sufficiently large $T_3>0$ such that
$$\E_{x,i} \1_{\{\wdt\zeta\geq kT_3\}}\wdt U(X(kT_2))\leq p^k\wdt U(x),\text{ for } (x,i)\in B_h\times\N.$$
where
$\wdt U(x)=\exp\big\{-\wdt\theta G(V(x))\big\}$,
and
$\wdt\zeta=\inf\{k\geq 0: U(X(kT_3))\leq\wdt\Delta^{-1}\}$.
Note that, unlike $U(x)$, we have $\lim_{x\to0} \wdt U(x)=\infty$.
Since $U(X(kT_3))\geq\wdt\Delta^{-1}$
if $\wdt\zeta\geq k$,
we have that
$$\PP_{x,i}\{\wdt\zeta=\infty\}=\lim_{k\to\infty}\PP_{x,i}\{\wdt\zeta\geq k\}=0.$$
\end{proof}
Similarly, we can obtain a counterpart of Theorem \ref{thm3.3} for instability.
\begin{thm}\label{thm3.5}
Suppose that the Markov chain $\hat\alpha(t)$
%admitting
%with generator $Q_0$
%as its generator
is strongly ergodic with invariant probability measure $\bnu=(\nu_1,\nu_2,\dots)$
and that there are functions $g\in \Gamma,$
$V: D\mapsto\R_+$ such that
\begin{itemize}
\item $V(x)=0$ if and only if $x=0$
\item $V(x)$ is continuous on $D$ and twice continuously differentiable in $D\setminus\{0\}$.
\item there is a bounded  sequence of real numbers $\{c_i: i\in\N\}$
such that
\begin{equation}%\label{e2-thm3.3}
\LL_i V(x)\geq c_ig(V(x)) \,\forall\, x\in D\setminus\{0\}.
\end{equation}
\end{itemize}
If
\eqref{e4-thm3.2}
and
\eqref{e1-thm3.1}
are satisfied and if $\sum_{i\in\N} c_i\nu_i>0$
then
the trivial solution is  unstable in probability.
\end{thm}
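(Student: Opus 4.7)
The plan is to mirror the proof of Theorem \ref{thm3.3}, with all inequalities reversed in exactly the way that the preceding (ergodic) instability theorem mirrors Theorem \ref{thm3.2}. The strong ergodicity together with \eqref{e1-thm3.1} lets us keep every estimate uniform in the discrete state $i \in \N$, so the two-scale splitting over discrete states used in the preceding instability theorem collapses to a single-scale argument.

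Concretely, applying It\^o's formula to $-G(V(X(\tau_h \wedge t)))$ with $G(y) = -\int_y^h g^{-1}(z)\,dz$ and using $\LL_i V \geq c_i g(V)$ gives, exactly as in the previous proof,
\[
-G(V(X(\tau_h \wedge t))) \leq -G(V(x)) + \widetilde H(t),
\]
where
\[
\widetilde H(t) = -\int_0^{\tau_h \wedge t} c(\alpha(s))\,ds - \int_0^{\tau_h \wedge t} \frac{V_x(X(s))\sigma(X(s),\alpha(s))}{g(V(X(s)))}\,dW(s).
\]
The Gronwall computation \eqref{e-eH}--\eqref{e12-thm3.2} applies verbatim to $\widetilde H$ since it uses only \eqref{e4-thm3.2}, so $\E_{x,i} e^{\theta \widetilde H(t)} \leq e^{\theta(\bar c + M_g)t}$ for $\theta \in [-1,1]$.

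Next, set $\lambda := \sum_{i \in \N} c_i \nu_i > 0$. Strong ergodicity produces $T>0$ with $\E_i \int_0^T c(\hat\alpha(s))\,ds \geq \tfrac{3}{4}\lambda T$ \emph{uniformly in} $i \in \N$. By Remark \ref{rem3.3}, condition \eqref{e1-thm3.1} implies \eqref{e1-thm3.3}, so there exists $h_1 \in (0,h)$ with $\E_{x,i}\int_0^T c(\alpha(s))\,ds \geq \tfrac{1}{2}\lambda T$ for all $|x| \leq h_1$, $i \in \N$. Lemma \ref{lm3.2} then furnishes $h_2 \in (0,h_1)$ with $\bar c\,\PP_{x,i}\{\tau_h < T\} \leq \lambda/4$, and these combine to give $\E_{x,i}\widetilde H(T) \leq -\lambda T/4$ uniformly on $B_{h_2} \times \N$. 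Lemma \ref{lm-a3}, applied with a sufficiently small $\tilde\theta \in (0,1/2]$, then delivers
\[
\E_{x,i} e^{\tilde\theta \widetilde H(T)} \leq \exp\!\Big(-\tfrac{\tilde\theta \lambda T}{8}\Big), \qquad |x| \leq h_2,\ i \in \N.
\]

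Now define $\widetilde U(x) := \exp(-\tilde\theta G(V(x)))$. Because $G(V(x)) \to -\infty$ as $x \to 0$, one has $\widetilde U(x) \to \infty$ near the origin. Exponentiating the It\^o inequality yields $\E_{x,i} \widetilde U(X(T \wedge \tau_h)) \leq p\, \widetilde U(x)$, where $p := \exp(-\tilde\theta \lambda T/8) < 1$. Fix any $\widetilde\Delta > 0$ and set $\widetilde\zeta := \inf\{k \geq 0 : \widetilde U(X(kT \wedge \tau_h)) \leq \widetilde\Delta^{-1}\}$. Iterating the one-step bound through the strong Markov property, exactly as in the preceding instability theorem (the iteration is legitimate because on $\{\widetilde\zeta \geq k\}$ the process has kept $\widetilde U$ large, hence remained in a small neighborhood of $0$ where the one-step bound applies), gives
\[
\E_{x,i}\!\left[\1_{\{\widetilde\zeta \geq k\}}\, \widetilde U(X(kT \wedge \tau_h))\right] \leq p^k\, \widetilde U(x).
\]
Markov's inequality then produces $\PP_{x,i}\{\widetilde\zeta \geq k\} \leq \widetilde\Delta\, p^k\, \widetilde U(x) \to 0$, so $\PP_{x,i}\{\widetilde\zeta < \infty\} = 1$. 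Starting arbitrarily close to $0$, the process therefore almost surely either exits $B_h$ or attains a state with $|X|$ bounded away from $0$; either outcome precludes stability in probability, establishing the theorem.

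The chief obstacle is justifying the iteration despite the one-step bound being valid only for $|x| \leq h_2$; this is handled by conditioning on $\{\widetilde\zeta \geq k\}$ so that the strong Markov property can be applied inductively while the process is still confined to the small neighborhood. A secondary point is keeping every quantitative step uniform in $i \in \N$, which is precisely what strong ergodicity (together with \eqref{e1-thm3.1}) provides and what distinguishes this proof from the ergodic-only version.
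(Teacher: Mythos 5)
Your proposal is correct in outline and follows exactly the route the paper intends: the paper gives no proof of this theorem beyond the remark that it is obtained ``similarly'' as a counterpart of Theorem \ref{thm3.3}, and your argument --- uniform-in-$i$ estimates supplied by strong ergodicity together with \eqref{e1-thm3.1} (via Remark \ref{rem3.3}), the reversed Lyapunov function $\widetilde U(x)=\exp(-\widetilde\theta\, G(V(x)))$ blowing up at the origin, and the iteration of the one-step contraction up to the stopping time $\widetilde\zeta$ --- is precisely the template of the paper's preceding instability theorem specialized to the strongly ergodic, uniform-in-$i$ setting. Two small points you inherit or should tighten: $\widetilde\Delta$ cannot be ``any'' positive number but must be chosen small enough that $\{\widetilde U>\widetilde\Delta^{-1}\}\subset B_{h_2}$ so that the one-step bound remains applicable along the iteration (you acknowledge the issue, but the quantifier should be fixed), and the It\^o expansion of $-G(V(X(\cdot)))$ contains the \emph{nonnegative} term $\int_0^{\tau_h\wedge t}\dot g(V(X(s)))\left|V_x(X(s))\sigma(X(s),\alpha(s))\right|^2\big/\big(2g^2(V(X(s)))\big)\,ds$, which cannot simply be discarded when proving an upper bound for $-G$ and must be absorbed (e.g., using \eqref{e4-thm3.2} and the boundedness of $\dot g$ near $0$, at the cost of adjusting the constants) --- a gap that is present in the paper's own instability argument as well.
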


\section{Linearized Systems}\label{sec:lin}
%[??Can we get a necessary condition as well??]
Suppose  that \eqref{e1-thm3.3} is satisfied and that $\hat\alpha(t)$
is  a strongly ergodic Markov chain.
\begin{asp}\label{asp4.1}
Suppose that for $i\in\N$, there exist $b(i),\sigma_k(i)\in\R^{n\times n}$ bounded uniformly for $i\in\N$ such that
$$\xi_i(x):=b(x,i)-b(i)x,\quad \zeta_{i}(x):=\sigma(x,i)-(\sigma_1(i)x,\dots,\sigma_d(i)x)$$
satisfying
\begin{equation}\label{e3-ex2}
\lim_{x\to0} \sup_{i\in\N}\left\{\dfrac{|\xi_i(x)|\vee |\zeta_i(x)|}{|x|}\right\}=0.
\end{equation}
\end{asp}
For $i\in\N$, $k\in\{1,\dots,n\}$,
let $\Lambda_{1,i}$ and $\Lambda_{2,i,k}$ be the maximum eigenvalues of
$\dfrac{b(i)+b^\top(i)}{2}$ and $\sigma_k(i)\sigma_k^\top(i)$
respectively.
Similarly,
denote by $\lambda_{1,i}$ and $\lambda_{2,i,k}$ be the minimum eigenvalues of
$\dfrac{b(i)+b^\top(i)}{2}$ and $\sigma_k(i)\sigma_k^\top(i)$
respectively.

Suppose that $\Lambda_{1,i}$ and $\Lambda_{2,i,k}$ are bounded in $i\in\N$
then
we claim that
if $$\sum_{i\in\N}\nu_i\left(\Lambda_{1,i}+\dfrac12\sum_{k=1}^n\Lambda_{2,i,k}\right)<0,$$
then the trivial solution is asymptotic stable.

To show that, let $\eps>0$ be sufficiently small such that
\begin{equation}\label{e4-ex2}
\sum_{i\in\N}\nu_i\left(\eps+\Lambda_{1,i}+\dfrac12\sum_{k=1}^n\Lambda_{2,i,k}\right)<0.
\end{equation}

Define $V(x)=|x|^p$, carry out the calculation and obtain the estimates
as in that of \cite[Theorem 4.3]{KZY},
we can find a sufficiently small $p>0$ and $\hbar>0$ such that
\begin{equation}\label{e5-ex2}
\LL_iV(x)\leq p\left(\eps+\Lambda_{1,i}+\dfrac12\sum_{k=1}^n\Lambda_{2,i,k}\right)V(x)\,\text{ for } 0<|x|<\hbar.
\end{equation}
(Note that the existence of such $p$ and $\hbar$
satisfying \eqref{e5-ex2} uniformly for $i\in\N$ is due to \eqref{e5-ex2} and the boundedness of $\Lambda_{1,i}$ and $\Lambda_{2,i,k}$.)

By \eqref{e4-ex2} and \eqref{e5-ex2},
it follows from Theorem \ref{thm3.3} that
the trivial solution is asymptotic stable
and for any $\eps>0$, there exists $\delta>0$, $\lambda>0$ such that
$$
\PP_{x,i}\left\{ \lim_{t\to\infty} e^{\lambda t}|X(t)|\leq 1\right\}\geq 1-\eps\,\text{ for } (x,i)\in B_\delta\times\N.
$$

Similarly, if
$\sum_{i\in\N}\nu_i\left(\lambda_{1,i}+\dfrac12\sum_{k=1}^n\lambda_{2,i,k}\right)>0,$
and if $\lambda_{1,i}$ and $\lambda_{2,i,k}$ are bounded in $i\in\N, k=1,\dots,n$,
we have that the trivial solution is unstable.
To sum up,
we have the following result.

\begin{prop}
Let Assumption \ref{asp4.1} is satisfied.
We claim that,
\begin{itemize}
  \item if $\Lambda_{1,i}$ and $\Lambda_{2,i,k}$ are bounded in $i\in\N$ and $$\sum_{i\in\N}\nu_i\left(\Lambda_{1,i}+\dfrac12\sum_{k=1}^n\Lambda_{2,i,k}\right)<0,$$
  then the trivial solution is asymptotically stable in probability;
  \item if $\lambda_{1,i}$ and $\lambda_{2,i,k}$ are bounded in $i\in\N, k=1,\dots,n$,
and
$$\sum_{i\in\N}\nu_i\left(\lambda_{1,i}+\dfrac12\sum_{k=1}^n\lambda_{2,i,k}\right)>0,$$
then the trivial solution is unstable in probability.
\end{itemize}
\end{prop}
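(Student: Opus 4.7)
The proposition is essentially a corollary of Theorems \ref{thm3.3} and \ref{thm3.5}, and the paper has already sketched the idea in the paragraph preceding it. My plan is to fill in the two ingredients that need to be checked: the pointwise Lyapunov inequality and the auxiliary condition \eqref{e4-thm3.2}, both uniformly in $i \in \N$.

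\textbf{Choice of Lyapunov function and the operator estimate.} The plan is to take $V(x)=|x|^p$ for a small $p\in(0,1)$, together with $g(y)=y$ so that Assumption \ref{asp3.3} reduces to $\LL_i V(x)\le c_i V(x)$. A direct computation gives
\[
\LL_i V(x)=p|x|^{p-2}x^\top b(x,i)+\tfrac{p}{2}|x|^{p-2}\,\trace A(x,i)+\tfrac{p(p-2)}{2}|x|^{p-4}x^\top A(x,i)x,
\]
where $A(x,i)=\sigma(x,i)\sigma^\top(x,i)$. Substituting the decomposition $b(x,i)=b(i)x+\xi_i(x)$ and $\sigma(x,i)=(\sigma_1(i)x,\dots,\sigma_d(i)x)+\zeta_i(x)$ from Assumption \ref{asp4.1}, and invoking the uniform smallness \eqref{e3-ex2}, the leading terms are controlled by the Rayleigh quotients of $\tfrac12(b(i)+b^\top(i))$ and of the matrices $\sigma_k(i)\sigma_k^\top(i)$, whose suprema are $\Lambda_{1,i}$ and $\Lambda_{2,i,k}$ respectively. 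For the stability claim I would mirror the estimate in \cite[Theorem 4.3]{KZY}: given the $\eps>0$ in \eqref{e4-ex2}, the boundedness of $\Lambda_{1,i}$ and $\Lambda_{2,i,k}$ together with \eqref{e3-ex2} yields a single $p>0$ small enough and a single $\hbar>0$ small enough so that \eqref{e5-ex2} holds uniformly in $i\in\N$ on $0<|x|<\hbar$, i.e.\ $\LL_i V(x)\le c_i V(x)$ with $c_i:=p(\eps+\Lambda_{1,i}+\tfrac12\sum_k\Lambda_{2,i,k})$. For instability, the analogous computation with the minimum eigenvalues produces $\LL_i V(x)\ge c_i' V(x)$ with $c_i':=p(-\eps+\lambda_{1,i}+\tfrac12\sum_k\lambda_{2,i,k})$.

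\textbf{Verifying the auxiliary hypotheses.} Condition \eqref{e4-thm3.2} becomes
\[
\sup_{0<|x|<\hbar,\,i\in\N}\left|\frac{V_x(x)\sigma(x,i)}{V(x)}\right|=\sup_{0<|x|<\hbar,\,i\in\N}\left|\frac{p\,x^\top\sigma(x,i)}{|x|^2}\right|\le p\sup_{0<|x|<\hbar,\,i\in\N}\frac{|\sigma(x,i)|}{|x|},
\]
which is finite because, by Assumption \ref{asp4.1} and \eqref{e3-ex2}, $|\sigma(x,i)|/|x|$ is bounded uniformly in $i$ near the origin. The sequence $\{c_i\}$ (respectively $\{c_i'\}$) is bounded because $\Lambda_{1,i},\Lambda_{2,i,k}$ (respectively $\lambda_{1,i},\lambda_{2,i,k}$) are assumed bounded in $i$. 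Finally $\sum_i c_i\nu_i<0$ (respectively $\sum_i c_i'\nu_i>0$) is guaranteed by choosing $\eps$ small via \eqref{e4-ex2} together with the standing hypothesis on $\sum_i\nu_i\Lambda$'s (respectively $\lambda$'s).

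\textbf{Invoking the main theorems and the possible obstacle.} With these ingredients in place, the stability statement follows directly from Theorem \ref{thm3.3}, which also delivers the exponential path-wise rate: since $g(y)=y$ gives $G^{-1}(-\lambda t)=he^{-\lambda t}$, the generic rate \eqref{e-rate} specializes to $|X(t)|^p\le h e^{-\lambda t}$ with large probability, whence $\PP_{x,i}\{\lim_{t\to\infty}e^{\lambda t/p}|X(t)|\le h^{1/p}\}\ge 1-\eps$, proving the claimed exponential decay. The instability assertion follows analogously from Theorem \ref{thm3.5} applied to $c_i'$. The one subtlety I expect to be mildly delicate is ensuring that $p$ and $\hbar$ can be chosen independently of $i\in\N$ in the uniform estimate \eqref{e5-ex2}; this is where the boundedness of the families $\{\Lambda_{1,i}\}$, $\{\Lambda_{2,i,k}\}$ (resp.\ $\{\lambda_{1,i}\}$, $\{\lambda_{2,i,k}\}$) together with the uniform decay \eqref{e3-ex2} of $|\xi_i|/|x|$ and $|\zeta_i|/|x|$ is essential, and a careful book-keeping of the error terms in the expansion of $\LL_i|x|^p$ is the only place where the argument is not immediate.
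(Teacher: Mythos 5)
Your proposal is correct and follows essentially the same route as the paper: the paper's own argument (the discussion preceding the proposition) likewise takes $V(x)=|x|^p$ with $g(y)=y$, derives the uniform estimate \eqref{e5-ex2} from the decomposition in Assumption \ref{asp4.1}, the boundedness of the eigenvalue families, and the uniform smallness \eqref{e3-ex2}, and then invokes Theorem \ref{thm3.3} for stability and the instability counterpart for the second claim. Your additional explicit verification of \eqref{e4-thm3.2} and of the exponential specialization of the rate \eqref{e-rate} only fills in details the paper leaves implicit.
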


\section{Examples}\label{sec:exa}
This section provides several examples.

\begin{exam}\label{ex1}
Consider a real-valued switching diffusion
\begin{equation}\label{e1-ex1}
dX(t)=b(\alpha(t))X(t)[|X(t)|^\gamma\vee1]dt+\sigma(\alpha(t))\sin^2 X(t)dW(t),\,\, 0<\gamma<1,
\end{equation}
%[??Should the above be $b(\alpha(t))$ and $\sigma(\alpha(t))$??]
where $a\vee b = \max (a,b)$ for two real numbers $a$ and $b$,
and $Q(x)=\big(q_{ij}(x)\big)_{\N\times\N}$
with
$$q_{ij}(x)=
\begin{cases}
-\check p_1(x)&\,\text{ if } i=j=1\\
\check p_1(x)&\,\text{ if } i=1,j=2\\
-\hat p_i(x)-\check p_i(x)&\,\text{ if }i=j\geq2\\
\hat p_i(x)&\,\text{ if } i\geq 2, j=i-1\\
\check p_i(x)&\,\text{ if } i\geq 2, j=i+1.
\end{cases}
$$
Note that the drift grow faster than linear and the diffusion coefficient
is locally like $x^2$ near the origin for the continuous state.
Suppose that $b(i),\sigma(i),\check p_i(x),\hat p_i(x)$ are bounded for $(x,i)\in\R\times\N$
and $\check p_i(x),\hat p_i(x)$ are continuous in $\R^n$ for each
$i\in\N$.
It is well known (see \cite[Chapter 8]{WA}) that if
$$\nu^*:=\sum_{k=2}^\infty \prod_{\ell=2}^k\dfrac{\check p_{\ell-1}(0)}{\hat p_\ell(0)}<\infty,$$
then $\hat\alpha(t)$ is ergodic with the invariant measure $\bnu$ given by
$$\nu_1=\dfrac1{\nu^*},\,\nu_k=\dfrac1{\nu^*}\prod_{\ell=2}^k\dfrac{\check p_{\ell-1}(0)}{\hat p_\ell(0)}, k\geq 2.$$
We suppose that
$$\sum b(i)\nu_i<0,\,\,\text{ and }\, \limsup_{i\to\infty} b(i)<0.$$
we will show that the trivial solution is stable.
Let $0<\eps<-\sum b(i)\nu_i$
then
$\sum [b(i)+\eps]\nu_i<0$.
Let $$V(x)=x^2$$
We have
$$\LL_i V(x)=2b(i)|x|^{2+2\gamma}+\sigma^2(i)\sin^4(x)$$
Since $\gamma<1$ and $\sigma(i)$ is bounded, there exists an $\hbar>0$ such that
$\sigma^2(i)\sin^4(x)\leq \eps|x|^{2+2\gamma}$
given that $|x|\leq \hbar$.
then
$$\LL_i V(x)\leq [2b(i)+\eps]|x|^{2+2\gamma}=[2b(i)+\eps] V^{1+\gamma}(x)\,\text{ in } [-\hbar,\hbar]\times\N.$$
By Theorem \ref{thm3.2}, the trivial solution is asymptotically stable in probability.
Moreover,
for the function $g(y)=y^{1+\gamma}$,
$$G(y):=-\int_y^\hbar \dfrac1{g(z)}ds=\hbar^{-\gamma}-y^{-\gamma},\, y\in(0,\hbar]$$
has the inverse
$$G^{-1}(-t)=\dfrac{1}{\left[t+\hbar^{-\gamma}\right]^{1/\gamma}},\, \text{ for } t\geq0$$
Thus, for any $\eps>0$,
there exists a $\delta>0$ such that
if $(x,i)\in [0,\delta]\times\N$,
then, there exists a $\lambda>0$ such that
$$\PP_{x,i}\left\{\limsup_{t\to\infty} t^{1/\gamma}X^2(t)\leq \lambda\right\}>1-\eps.$$
\end{exam}

\begin{exam}\label{ex2}
This example consider a
random-switching linear systems of differential equations:
\begin{equation}\label{e1-ex2}
d X(t)=A(\alpha(t))X(t)dt
\end{equation}
where $A(i)\in\R^{n\times n}$
satisfying $\sup_{i\in\N}\{|\lambda_i|\vee|\Lambda_i|\}<\infty$
with $\lambda_i,\Lambda_i$ being the minimum and maximum eigenvalues
of $A(i)$, respectively.
Let
$$
Q(x)=\left(\begin{array}{cccccc}
-1-\sin|x|&1+\sin|x|&0&0&0&\cdots\\
1+\sin|x|&-2-2\sin|x|&1+\sin|x|&0&0&\cdots\\
1+\sin|x|&0&-2-2\sin|x|&1+\sin|x|&0&\cdots\\
1+\sin|x|&0&0&-2-2\sin|x|&1+\sin|x|&\cdots\\
\vdots&\vdots&\vdots&\vdots&\vdots&\ddots\\
\end{array}\right).
$$
By \cite[Proposition 3.3]{WA},
it is easy to verify that the Markov chain
$\hat\alpha(t)$ with generators $Q(0)$
is strongly ergodic.
Solving the system
$$\bnu Q(0)=0, \sum\bnu_i=1$$
we obtain that the invariant measure of $\hat\alpha(t)$
is
$(\nu_i)_{i=1}^\infty=(2^{-i})_{i=1}^\infty.$
Thus,
if
$\sum \lambda_i2^{-i}>0$ the trivial solution to \eqref{e1-ex2} is unstable.
In case $\sum \Lambda_i 2^{-i}<0$ the trivial solution to \eqref{e1-ex2} is asymptotically stable in probability.
In particular,
suppose that
$n=2$ and $A(i)$ are upper triangle matrices, that is,
$$A(i)=\left(\begin{array}{cc}
a_i&b_i\\
0&c_i\\
\end{array}\right).
$$
If $a_i$ and $c_i$ are positive for $i\geq 2$, then the system
$dX(t)=A(i)X(t)dt$
is unstable.
However, if $a_1, c_1<-\sup_{i\geq2}\{a_i, c_i\}$,
then $\sum (a_i\vee c_i)2^{-i}<0$.
Thus, the switching differential system is asymptotically stable.
The stability of the system at state 1 and the switching process
become a stabilizing factor.

On the other hand,
if $a_i\wedge c_i$ is negative for $i\geq 2$, then the system
$dX(t)=A(i)X(t)dt$
is asymptotically stable.
Suppose further that $a_1, c_1>\sup_{i\geq2}\{-(a_i\wedge c_i)\}$,
then $\sum (a_i\vee c_i)2^{-i}>0$.
Under this condition, the switching differential system is unstable.
\end{exam}

\section{Further Remarks}\label{sec:rem}

Using a new method, we provide sufficient
conditions for stability
 and instability in probability of a class of regime-switching diffusion systems with switching states belonging to a countable set.
The conditions are based on the relation of a ``switching-independent" Lyapunov function
and the generator of the switching part.

Although the systems under consideration are memoryless,
the main results of this paper hold if we assume that
the switching intensities $q_{ij}$ depend on
the history of $\{X(t)\}$ rather than the current state of $X(t)$, (see \cite{DY1, DY2}
for  fundamental properties of this process).
The problem can be formulated as follows.
Let $r$ be a fixed positive number.
Denote by $\C$ the set of  $\R^n$-valued continuous functions defined on $[-r, 0]$.
For $\phi\in\C$, we use the norm $\|\phi\|=\sup\{|\phi(t)|: t\in[-r,0]\}$.
For $t\geq0$, we denote  by $y_t$
the
so-called segment function (or memory segment function) $y_t =\{ y(t+s): -r \le s \le 0\}$.
We assume that the jump intensity of $\alpha(t)$ depends on the trajectory of $X(t)$ in the interval $[t-r,t]$.
That is, there are functions $q_{ij}(\cdot):\C\to\R$ for $i,j\in\N$
satisfying that
 $q_{i}(\phi):=\sum_{j=1,j\ne i}^\infty q_{ij}(\phi)$ is uniformly bounded in $(\phi,i)\in\C\times\N$ and that
$q_{i}(\cdot)$ and $q_{ij}(\cdot)$ are continuous such that
\begin{equation}\label{eq:tran-p}\begin{array}{ll}
&\disp \PP\{\alpha(t+\Delta)=j|\alpha(t)=i, X_s,\alpha(s), s\leq t\}=q_{ij}(X_t)\Delta+o(\Delta) \text{ if } i\ne j \
\hbox{ and }\\
&\disp \PP\{\alpha(t+\Delta)=i|\alpha(t)=i, X_s,\alpha(s), s\leq t\}=1-q_{i}(X_t)\Delta+o(\Delta).\end{array}\end{equation}

It is proved in \cite{DY1} that
if either Assumption \ref{asp4.1} or Assumption \ref{asp4.2}
is satisfied with $x, y\in\R^n$ replaced by $\phi,\psi\in\C$,
then there is a unique solution
to the switching diffusion \eqref{eq:sde} and \eqref{eq:tran-p}
with a given initial value.
Moreover, the process $(X_t,\alpha(t))$ has the Markov-Feller property.
With slight modification in the proofs,
the theorems in Section 3 still hold
for  system \eqref{eq:sde} and \eqref{eq:tran-p}.

Our method can also be applied to regime-switching jump diffusion processes.
The results obtain by using our method will
generalize existing results (e.g., \cite{XY,YX})
to the case of regime-switching jump diffusions with countable regimes.

On the other hand,
there is a gap between sufficient conditions for stability and instability in Proposition 4.1.
To overcome the difficulty, 
we need to make a polar coordinate transformation to
decompose 
of $X(t)$
into the radial part $r(t)=|X(t)|$ and the angular part $Y(t)=X(t)/r(t)$.
Then, the Lyapunov exponents with respect to invariant measures
of the linearized process of $(Y(t), \alpha(t))$
will determine whether or not the system is stable.
This approach has been used
to treat many linear and linearized stochastic systems
(e.g., \cite{AKO, PB, RK}).
In our setting,
the switching $\alpha(t)$
take values in a noncompact space,
thus, it is more difficult to examine invariant measures.
We will address this problem
together necessary conditions of stability 
 in a subsequent paper.

\appendix
\section{Appendix}\label{sec:apd}
\begin{proof}[Proof of Lemma \ref{lm3.2}]
Since $g$ is continuously differentiable and $g(0)=0$,
there is $K_g>0$ such that $g(z)\leq K_g |z|$ for $|z|\leq 1$.
Thus, we have
$$\LL_i V(x)\leq K_g\sup_{i\in\N}\{|c_i|\} V(x),\,\, (x,i)\in D\times\N$$
Letting $\tilde K=K_g\sup_{i\in\N}\{|c_i|\}$, by It\^o's formula,
$$\begin{aligned}
\E_{x,i} V(X(t\wedge \tau_h))\leq& E V(x)+\tilde K\E_{x,i}\int_0^{t\wedge\tau_h} V(X(s))ds\\
\leq& E V(x)+\tilde K\E_{x,i}\int_0^t\E_{x,i} V(X(s\wedge\tau_h))ds.
\end{aligned}
$$
By the Grownwall inequality, we can easily obtain
$$
\E_{x,i} V(X(T\wedge \tau_h))\leq V(x)e^{KT}.
$$
Since $V(0)=0$, standard arguments lead to the desired result.
\end{proof}

\begin{proof}[Proof of Lemma \ref{lm-a3}]
It is easy to show that there exists some $K_2>0$ such that
$$ |y|^k\exp(\theta y)\leq K_2(\exp(\theta_0y)+\exp(-\theta_0y)), k=1,2.$$
for $\theta\in \left[0,\frac{\theta_0}2\right]$, $y\in\R$.
For any $y\in\R$, let $\xi(y)$ be a number lying between $y$ and $0$ such that
$\exp(\xi(y))=\dfrac{e^y-1}y$.
Pick $\theta\in\left[0,\frac{\theta_0}2\right]$
and let $h\in\R$ such that $0\leq \theta+h\leq  \frac{\theta_0}2$. Then
$$\lim\limits_{h\to0}\dfrac{\exp((\theta+h) Y)-\exp(\theta Y)}h= Y\exp(\theta Y)\text{ a.s.,}$$
where $Y$ is as defined in Lemma \ref{lm-a3},
and
$$\left|\dfrac{\exp((\theta+h) Y)-\exp(\theta Y)}h\right|=|Y|\exp(\theta Y+\xi(hY))
\leq 2K_3[ \exp(\theta_0 Y)+\exp(-\theta_0 Y)].$$
By the Lebesgue dominated convergence theorem,
$$\dfrac{d \E \exp(\theta Y)}{d\theta}=\lim\limits_{h\to0}\E\dfrac{\exp((\theta+h) Y)-\exp(\theta Y)}h= \E Y\exp(\theta Y).$$
Similarly,
$$\dfrac{d^2 \E \exp(\theta Y)}{d\theta^2}=\E Y^2\exp(\theta Y).$$
As a result, we obtain
$$\dfrac{d\phi}{d\theta}=\dfrac{\E Y\exp(\theta Y)}{\E \exp(\theta Y)}$$
which implies
	$$\dfrac{d\phi}{d\theta}(0)=\E Y$$
and
$$\dfrac{d^2\phi}{d\theta^2}=\dfrac{\E Y^2\exp(\theta Y)\E \exp(\theta Y)-[\E Y\exp(\theta Y)]^2}{[\E \exp(\theta Y)]^2}.$$
By H{\"o}lder's inequality we have $\E Y^2\exp(\theta Y)\E \exp(\theta Y)\geq[\E Y\exp(\theta Y)]^2$
and therefore $$\dfrac{d^2\phi}{d\theta^2}\geq0\,,\forall\,\theta\in \left[0,\frac{\theta_0}2\right].$$
Moreover,
$$
\begin{aligned}
\dfrac{d^2\phi}{d\theta^2}\leq& \dfrac{\E Y^2\exp(\theta Y)}{\E \exp(\theta Y)}\\
\leq & \dfrac{K_3(\E \exp(\theta_0 Y)+\E \exp(-\theta_0 Y))}{\exp(\theta \E Y)}\\
\leq & \dfrac{K_3(\E \exp(\theta_0 Y)+\E \exp(-\theta_0 Y))}{\exp(-\theta_0|\E Y|)}:=K_2,
\end{aligned}
$$
which concludes the proof.
\end{proof}
\begin{proof}[Proof of Lemma \ref{lm3.3}]
Let $\bar\tau_n$
be the $n-$th jump moment
of $\alpha(t)$.
Let $T>0$,
In view of \cite[Lemma 4.3.2]{XM},
we have
$$\PP_{x,i}\{X(t)=0\,\text{ for some }\, t\in[0,T\wedge\bar\tau_1]\}=0
\,\text{ for any }\, x\ne 0, i\in\N.$$
Since
$X(T\wedge\bar\tau_1)\ne 0$ a.s.,
applying \cite[Lemma 4.3.2]{XM} again yields
$$\PP_{x,i}\{X(t)=0\,\text{ for some }\, t\in[T\wedge\bar\tau_1,T\wedge\bar\tau_2]\}=0
\,\text{ for any }\, x\ne 0, i\in\N.$$
Continuing this way, we have
\begin{equation}\label{e1-lm3.3}
\PP_{x,i}\{X(t)=0\,\text{ for some }\, t\in[0,T\wedge\bar\tau_n]\}=0
\,\text{ for any }\, x\ne 0, i\in\N, n\in\N.
\end{equation}
In \cite[Theorems 3.1 \& 3.3]{DY1},
we have that $\lim_{n\to\infty}\bar\tau_n=\infty$.
This and \eqref{e1-lm3.3}
imply
$$\PP_{x,i}\{X(t)=0\,\text{ for some }\, t\in[0,T]\}=0
\,\text{ for any }\, x\ne 0, i\in\N, n\in\N.
$$
Since $T$ is taken arbitrarily,
we obtain the desired result.
\end{proof}

\begin{lm}\label{lm-a1}
If the Markov chain $\hat\alpha(t)$ is strongly exponentially ergodic with generator $\hat Q$
and invariant probability measure $\bnu=(\nu_1,\nu_2,\dots)^\top$,
then if $\mathbf{b}=(b_1,b_2,\dots)^\top$ is bounded satisfying
$\sum \nu_ib_i=0$, then, there exists a bounded vector $\mathbf{c}=(c_1,c_2,\dots)^\top$ such that
$b_i=\sum \hat q_{ji}c_j$.
\end{lm}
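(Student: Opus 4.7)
The plan is to solve the equation by an integral of the semigroup $\hat P(t) = e^{\hat Q t}$, but I should first flag a subtle issue with the statement. As written, $b_i = \sum_j \hat q_{ji} c_j$ is $b = \hat Q^\top c$, for which the natural solvability condition is orthogonality of $b$ to $\ker(\hat Q)\ni \mathbf 1$, i.e. $\sum_i b_i = 0$. The stated hypothesis $\sum_i \nu_i b_i = 0$ is instead the orthogonality of $b$ to the left null-vector $\bnu$ of $\hat Q$, which is precisely the solvability condition for the Poisson equation $\hat Q c = b$ (i.e. $b_i = \sum_j \hat q_{ij} c_j$). This latter form is also exactly what the lemma is used for in the proof of Theorem \ref{thm3.1} (see the step producing \eqref{e3-thm3.1}, where one needs $\sum_j q_{ij}(0)\gamma_j = \lambda + c_i$). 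Accordingly I read the subscript ordering in the statement as a typographical slip and prove $b = \hat Q c$; the argument below does not otherwise change.

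Strong exponential ergodicity gives $C,\lambda_0>0$ such that $\sup_i \sum_j |\hat p_{ij}(t) - \nu_j| \le C e^{-\lambda_0 t}$. Combining this with the centering $\sum_j \nu_j b_j = 0$,
$$(\hat P(t) b)_i \;=\; \sum_j \hat p_{ij}(t) b_j \;=\; \sum_j \bigl(\hat p_{ij}(t) - \nu_j\bigr) b_j,$$
so $\|\hat P(t) b\|_\infty \le C\|b\|_\infty e^{-\lambda_0 t}$. This uniform exponential decay is the key estimate.

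Then I would define
$$c_i \;:=\; -\int_0^\infty (\hat P(t) b)_i\, dt,$$
for which the estimate above gives absolute convergence and $\|c\|_\infty \le C\|b\|_\infty/\lambda_0$. To check $\hat Q c = b$, note that under Assumption \ref{asp4.1} or \ref{asp4.2} one has $\sup_i q_i \le M < \infty$, so $\hat Q$ is a bounded operator on $\ell^\infty$ with $\|\hat Q\|\le 2M$. It therefore commutes with the improper integral, giving
$$\hat Q c \;=\; -\int_0^\infty \hat Q \hat P(t) b\, dt \;=\; -\int_0^\infty \tfrac{d}{dt}\bigl(\hat P(t) b\bigr)\, dt \;=\; \hat P(0) b \;-\; \lim_{t\to\infty}\hat P(t) b \;=\; b,$$
using the uniform decay $\hat P(t) b \to 0$ from the previous step.

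The main obstacle is not any single deep step but the careful bookkeeping: justifying the componentwise interchange of $\hat Q$ with the integral (handled via the uniform bound and the boundedness of $\hat Q$ on $\ell^\infty$), working pointwise-in-$i$ rather than in a separable Banach space, and reconciling the solvability condition $\sum_i \nu_i b_i = 0$ with the stated form of the equation as discussed in the first paragraph. Once the convention is fixed the remainder is a standard resolvent identity $c = -\int_0^\infty \hat P(t) b\, dt$ for uniformly ergodic chains applied to a centered observable.
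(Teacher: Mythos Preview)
Your proposal is correct and follows essentially the same route as the paper: define $c = -\int_0^\infty \hat P(t)b\,dt$, use strong exponential ergodicity plus the centering $\sum_j \nu_j b_j = 0$ to get boundedness, and then compute $\hat Q c = -\int_0^\infty \frac{d}{dt}\hat P(t)b\,dt = b$. Your remark on the index ordering is also well taken: the paper's own proof establishes $\hat Q c = b$ (i.e., $b_i = \sum_j \hat q_{ij} c_j$), confirming that the $\hat q_{ji}$ in the displayed statement is a typographical slip.
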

\begin{proof}
Let $\hat P(t)=\hat p_{ij}(t)$, where $\hat p_{ij}(t)=\PP\{\hat\alpha(t)=j|\alpha(0)=i\}$, the transition matrix of $\hat\alpha(t)$.
Let $\mathbf{c}=(c_1,c_2,\dots)^\top$ where $c_i=\int_0^\infty [\nu_jb_j-\hat P_{ij}(t)b_i] dt$.
In view of \eqref{see}, it is easy to see that $\mathbf{c}$ is bounded.
Let $\mathbbm{1}=(1, 1, \dots)$. We have
$$
\begin{aligned}
\hat Q\mathbf{c}=&\int_0^\infty \big[\hat Q\bnu\mathbbm{1}\mathbf{b}-\hat Q\hat P(t)\mathbf{b}\big]dt\\
=&-\int_0^\infty \hat Q\hat P(t)\mathbf{b}dt\\
=&-\int_0^\infty\hat P(t)\mathbf{b}dt=-\hat P(t)\mathbf{b}\Big|_0^\infty\\
=&-\mathbbm{1}\bnu\mathbf{b}+\mathbf{b}=\mathbf{b}.
\end{aligned}
$$
\end{proof}

\begin{lm}\label{lm-a2}
Suppose that Assumption
%\eqref{asp3.3}
\ref{asp3.3}
and \eqref{e1-thm3.1} hold.
Then
for any $T>0$ and a bounded function $f:\N\mapsto\R$, we have
\begin{equation}\label{e3-a2}
\lim_{x\to0} \sup_{i\in\N,t\in[0,T]}\left\{\left|\E_{x,i}f(\alpha(t))-\E_{i}f(\hat\alpha(t)\right|\right\}=0.
\end{equation}
%[??The second one $\E_{i}f(\hat\alpha(t)$...??]
\end{lm}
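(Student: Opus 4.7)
The strategy is to use the Kolmogorov backward equation for the reference chain $\hat\alpha(\cdot)$ to express $\E_{x,i}f(\alpha(t))-\E_i f(\hat\alpha(t))$ as the expectation of a drift that depends only on the differences $q_{ij}(X(s))-q_{ij}(0)$; these are made small uniformly in $i\in\N$ by \eqref{e1-thm3.1}, while $|X(s)|$ is kept small uniformly in $i$ by Lemma~\ref{lm3.2}. Set $\hat u(s,i):=\E_i f(\hat\alpha(s))$. Since $Q(0)$ acts boundedly on the space of bounded sequences, $\hat u$ is bounded by $\|f\|_\infty$, continuously differentiable in $s$, and satisfies
$$\partial_s\hat u(s,i)=\sum_{j\neq i}q_{ij}(0)\bigl[\hat u(s,j)-\hat u(s,i)\bigr].$$

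First I would apply the (time-dependent extension of the) generalized It\^o formula \eqref{e2.4} to $g(s,x,i):=\hat u(t-s,i)$ along $(X(r),\alpha(r))$. Because $g$ is independent of $x$, the Brownian martingale $M_1$ vanishes identically, and the drift $\partial_s g+\mathcal{L}g$ collapses, once the $Q(0)$-contribution from $-\partial_\tau\hat u(t-s,i)$ cancels the $Q(0)$-part of $\mathcal{L}$, to
$$\sum_{j\neq\alpha(s)}\bigl[q_{\alpha(s),j}(X(s))-q_{\alpha(s),j}(0)\bigr]\bigl[\hat u(t-s,j)-\hat u(t-s,\alpha(s))\bigr].$$
The compensated-Poisson term $M_2$ is a true martingale since its jump size is bounded by $2\|f\|_\infty$ and its compensator has finite expectation. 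Taking expectations at time $t$, and using $g(t,X(t),\alpha(t))=f(\alpha(t))$ and $g(0,x,i)=\hat u(t,i)$, yields
$$\E_{x,i}f(\alpha(t))-\E_if(\hat\alpha(t))=\E_{x,i}\!\int_0^t\!\sum_{j\neq\alpha(s)}[q_{\alpha(s),j}(X(s))-q_{\alpha(s),j}(0)][\hat u(t-s,j)-\hat u(t-s,\alpha(s))]\,ds.$$
With $\phi(y):=\sup_{i\in\N}\sum_{j\neq i}|q_{ij}(y)-q_{ij}(0)|$, which by \eqref{e1-thm3.1} is bounded and satisfies $\phi(y)\to 0$ as $y\to 0$, the representation gives
$$\sup_{t\in[0,T]}\bigl|\E_{x,i}f(\alpha(t))-\E_if(\hat\alpha(t))\bigr|\leq 2\|f\|_\infty\int_0^T\E_{x,i}\phi(X(s))\,ds,$$
uniformly over $i\in\N$.

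It remains to verify that $\E_{x,i}\phi(X(s))\to 0$ uniformly in $(i,s)\in\N\times[0,T]$ as $x\to 0$. Given $\delta>0$, pick $h>0$ so that $\sup_{|y|\leq h}\phi(y)<\delta$, and then by Lemma~\ref{lm3.2} choose $\tilde h=\tilde h(\delta,T,h)$ with $\PP_{x,i}\{\tau_h<T\}<\delta$ uniformly in $i\in\N$ whenever $|x|\leq\tilde h$. Splitting $\E_{x,i}\phi(X(s))$ over $\{\tau_h\geq T\}$ and its complement, and invoking the crude bound $\phi\leq 2\sup_{i,y}q_i(y)$ (finite by Assumption~\ref{asp4.1}(3), or, under Assumption~\ref{asp4.2}(3), after a preliminary localization) on the exceptional event, yields a bound of the form $\delta(1+C)$, which can be made arbitrarily small. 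The main obstacle is keeping every estimate uniform over the countably infinite index set $\N$: the backward equation $\partial_s\hat u=Q(0)\hat u$ involves an infinite sum, justified termwise by $\sup_i q_i(0)<\infty$, and crucially both the hypothesis \eqref{e1-thm3.1} and the conclusion of Lemma~\ref{lm3.2} are stated with suprema over $i\in\N$, which is precisely what allows the argument to run as in the finite-state case.
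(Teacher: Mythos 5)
Your proof is correct, but it follows a genuinely different route from the paper's. The paper proves Lemma \ref{lm-a2} by the basic coupling method: it realizes $(\alpha(t),\hat\alpha(t))$ jointly as the switching component of a process with coupled generator $\wdt Q(X(t))$, applies It\^o's formula to the indicator of the diagonal to show that the decoupling time $\vartheta=\inf\{t:\alpha(t)\ne\hat\alpha(t)\}$ satisfies $\PP_{x,i,i}\{\vartheta\le T\wedge\tau_h\}\le T\sup_{(y,k)\in B_h\times\N}\sum_{j\ne k}|q_{kj}(y)-q_{kj}(0)|$, and then combines this with Lemma \ref{lm3.2} to conclude $|\E_{x,i}f(\alpha(t))-\E_i f(\hat\alpha(t))|\le 2M_f\,\PP_{x,i,i}\{\vartheta\wedge\tau_h\le t\}$. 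You instead use a Duhamel-type argument: solve the backward equation for $\hat u(s,i)=\E_i f(\hat\alpha(s))$ (legitimate since $Q(0)$ is a bounded operator on bounded sequences), apply the time-dependent It\^o formula to $\hat u(t-s,\alpha(s))$, and represent the difference of expectations as an integral of the rate perturbations $q_{ij}(X(s))-q_{ij}(0)$ against $\hat u$. Both proofs rest on exactly the same two ingredients --- the uniform smallness \eqref{e1-thm3.1} of $\sup_i\sum_{j\ne i}|q_{ij}(x)-q_{ij}(0)|$ near $0$ and Lemma \ref{lm3.2} keeping $X$ inside $B_h$ with high probability --- and both localize at $\tau_h$. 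The coupling argument buys the stronger pathwise statement that $\alpha$ and $\hat\alpha$ agree on all of $[0,T]$ with probability at least $1-\eps$ (hence the estimate for all bounded $f$ and all $t\le T$ simultaneously, for free), at the cost of constructing and justifying the coupled generator; your analytic route avoids that construction entirely and is arguably more elementary. One point to tighten: under Assumption \ref{asp4.2} the intensities are only bounded on bounded $x$-sets, so both the martingale property of the compensated-jump term and your crude bound on $\phi$ off the event $\{\tau_h\ge T\}$ require you to run the It\^o formula only up to $t\wedge\tau_h$ and absorb the resulting boundary term $\E\bigl[\hat u(t-\tau_h,\alpha(\tau_h))-f(\alpha(t))\bigr]\1_{\{\tau_h<t\}}$ into the $2\|f\|_\infty\PP\{\tau_h<T\}$ error; you gesture at this but should carry the stopping time through the representation formula explicitly.
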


\begin{proof}
By the basic coupling method (see e.g., \cite[p. 11]{Chen}),
we can consider the joint process $(X(t),\alpha(t),\hat\alpha(t))$
as a switching diffusion
where the diffusion $X(t)\in\R^n$ satisfies
satisfying
\begin{equation}\label{e1-a3}
dX(t)=b(X(t), \alpha(t))dt+\sigma(X(t),\alpha(t))dw(t)
\end{equation}
and the
switching part $(\alpha(t),\hat\alpha(t))\in\N\times\N$ has the generator $\wdt Q(X(t))$ which is defined by
\begin{equation}
\begin{split}
\wdt  Q(x)\wdt  f(k,l)=&\sum_{j,i\in\N}\wdt q_{(k,l)(j,i)}(x)\left(\wdt  f(j,i)-\wdt  f(k,l)\right)\\
=&\sum_{j\in\N}[q_{kj}(x)-q_{lj}(0)]^+(\wdt  f(j,l)-\wdt  f(k,l))\\
&+\sum_{j\in\N}[q_{lj}(0)-q_{kj}(  x)]^+(\wdt  f(k,j)-\wdt  f(k,l))\\
&+\sum_{j\in\N}[q_{kj}(x)\wedge q_{lj}(0)](\wdt  f(j,j)-\wdt  f(k,l)).
\end{split}
\end{equation}
In what follows, we use
 the notation
 $\E_{x,i,j}$ and $\PP_{x,i,j}$ to denote the corresponding conditional expectation and probability for the coupled process
$(X(t),\alpha(t), \hat\alpha(t))$
conditioned on $(X(0),\alpha(0), \hat\alpha(0))=(x,i,j)$.
%[??$\E_{x,i,i}$ and $\PP_{x,i,i}$ are for the coupling. Better to indicate them??]
Let $\vartheta=\inf\{t\geq0: \alpha(t)\ne\hat\alpha(t)\}$.
Define $\tilde g:\Z\times\Z\mapsto\R$ by $\wdt g(k,l)=\1_{\{k=l\}}$.
By the definition of the function $\wdt g$,  we have
\begin{equation}\label{e3-a3}
\begin{aligned}
\wdt  Q(x)\wdt  g(k,k)=&\sum_{j\in\N, j\ne k}[q_{kj}(x)-q_{kj}(0)]^++\sum_{j\in\N, j\ne k}[q_{kj}(0)-q_{kj}(x)]^+\\
=&\sum_{j\in\N, j\ne k}|q_{kj}(x)-q_{kj}(0)|=:\Xi(x, k).
\end{aligned}
\end{equation}
For any $\eps>0$,
let $h>0$ such that $B_h\in D$ and
$\sup_{(x,k)\in B_h\times\N} \Xi(x,k)<\frac{\eps}{2T}$.
Applying  It\^o's formula and noting that $\alpha(t)=\hat \alpha(t), t\leq\vartheta$,

we obtain that
\begin{equation}\label{e4-a3}
\begin{split}
\PP_{x,i,i}\{\vartheta\leq T\wedge\tau_h\}=& \E_{x,i,i} \wdt g\left(\alpha(\vartheta\wedge T\wedge\tau_h),\hat \alpha(\vartheta\wedge T\wedge\tau_h)\right)\\
=&\E_{x,i,i}\int_0^{\vartheta\wedge T\wedge\tau_h}\wdt Q(X(t)) \wdt  g(\alpha(t),\hat \alpha(t))dt\\
=&\E_{x,i,i} \int_0^{\vartheta\wedge T\wedge\tau_h}\Xi(X(t), \alpha(t))dt\\
\leq& T\sup_{(x,i)\in B_h\times\N}\Xi(x,k)\leq \dfrac\eps2.
\end{split}
\end{equation}
Thus
In view of Lemma \ref{lm3.2},
there is $\delta>0$ such that
$\PP_{x,i,i}\{\tau_h\leq T\}\leq \dfrac\eps2$.
This and \eqref{e4-a3} derive
$$
\PP_{x,i,i}\{\vartheta\wedge\tau_h\leq T\}\leq \PP_{x,i,i}\{\vartheta\leq T\wedge\tau_h\}+\PP_{x,i,i}\{\tau_h\leq T\}\leq \eps.$$

We have that
$$
\begin{aligned}
\left|\E_{x,i}f(\alpha(t))-\E_{0,i}f(\alpha(t))\right|
%=&|\E_{x,i,i}[f(\alpha(t))-f(\hat\alpha(t))]|\\
=&\left|\E_{x,i,i}\left[f(\alpha(t))-f(\hat\alpha(t))\right]\right|\\
=&\left|\E_{x,i,i}\1_{\{\vartheta\wedge\tau_h\leq t\}}\left[f(\alpha(t))-f(\hat\alpha(t))\right]\right|\\
\leq &2M_f\PP_{x,i,i}\{\vartheta\wedge\tau_h\leq t\}\leq 2M_f\eps,\text{ for } t\in[0,T].
\end{aligned}
$$
where $M_f=\sup_{i\in\N} |f(i)|$.
The lemma is proved.
\end{proof}

\end{document}